\CompileMatrices\SelectTips{cm}{12}
\theoremstyle{plain}
\newtheorem{Thm}{\sc Theorem}[section]
\newtheorem{theorem}[Thm]{\sc Theorem}
\newtheorem{corollary}[Thm]{\sc Corollary}
\newtheorem*{corollary*}{\sc Corollary}
\newtheorem{proposition}[Thm]{\sc Proposition}
\newtheorem*{proposition*}{\sc Proposition}
\newtheorem{lemma}[Thm]{\sc Lemma}
\theoremstyle{remark}
\newtheorem{remark}[Thm]{Remark}
\newtheorem{example}[Thm]{Example}
\newtheorem*{example*}{Example}
\newtheorem*{remark*}{Remark}
\newcommand{\cC}{{\mathcal C}}
\newcommand{\cO}{{\mathcal O}}
\newcommand{\NN}{{\mathbb N}}
\newcommand{\PP}{{\mathbb P}}
\newcommand{\QQ}{{\mathbb Q}}
\newcommand{\Flag}{{\mathop{\rm Flag}}}
\newcommand{\Gras}{\mathop{\rm Gr}}
\newcommand{\id}{{\mathop{\rm id}}}
\newcommand{\Spec}{\mathop{\rm Spec \, }}
\newcommand{\Gr}{\mathop{{\rm Gr}}}
\newcommand{\ch}{{\mathop{\rm ch \, }}}
\newcommand{\Sch}{{\mathop{{\rm Sch }}}}
\newcommand{\mult}{{\mathop{{\rm mult\, }}}}
\newcommand{\nf}{\mathop{{\rm nf}}}
\newcommand{\redu}{\mathop{{\rm red}}}
\newcommand{\GL}{\mathop{\rm GL}}
\newcommand{\Pic}{{\mathop{\rm Pic\, }}}
\newcommand{\Vect}{{\mathop{\rm Vect }}}
\newcommand{\Sets}{{\mathop{{\rm Sets}}}}
\begin{document}

\markboth{\rm }{\rm  }

\title{Approximation of semistable bundles on smooth algebraic varieties}
\author{Adrian Langer}

\date{\today}

\maketitle

{\sc Address:}\\
Institute of Mathematics, University of Warsaw,
ul.\ Banacha 2, 02-097 Warszawa, Poland\\
e-mail: {\tt alan@mimuw.edu.pl}

\medskip

\begin{abstract} 
	We prove some strong results on approximation of strongly semistable bundles with vanishing numerical 
	Chern classes by filtrations, whose quotients are line bundles of similar slope. This generalizes some earlier results of  Parameswaran--Subramanian in the curve case and Koley--Parameswaran in the surface case and it confirms the conjecture posed by Koley and Parameswaran.
\end{abstract}

\medskip

\thanks{
\emph{To the memory of Professor Piotr Pragacz}}

\section*{Introduction}

Deligne and Sullivan proved in \cite{DS} that a flat vector bundle on a complete smooth complex variety admits an \'etale cover such that its pull back is trivial as a  $\mathcal C^{\infty}$-vector bundle. This is obviously false in the algebraic case even for flat line bundles and even if we allow arbitrary finite surjective morphisms. Instead, one can try to understand how far the pull-backs of a given flat vector bundle are from the trivial bundle in the case of finite surjective morphisms. The best one can hope for is that  for a given flat vector bundle $E$ on a smooth projective variety $X$ over some algebraically closed  field $k$ there exists 
a finite surjective morphism $f: Y\to X$ such that $f^*E$ is algebraically equivalent (i.e., it can be deformed) to the trivial flat vector bundle. This is indeed true for numerically flat vector bundles in case $k$ is an algebraic closure of a finite field (see \cite[Theorem 2.9]{FL}). However, the most interesting case, that of characteristic zero, seems out of reach. In this case the above prediction would imply a weak version of Bloch's conjecture on Chern classes of flat algebraic vector bundles: Chern classes of flat vector bundles should be, modulo torsion, algebraically equivalent to $0$.

One can also try to approximate vector bundles using finite surjective morphisms looking at sub-bundles of pull-backs of a given vector bundle. In the case of curves, such a study was done by Parameswaran and Subramanian in \cite{PS}, and they proved that every semistable rank $r$ vector bundle $E$ on a smooth projective curve $X$ admits a sequence of finite surjective morphisms $f_m:Y_m \to X$ such that the maximal possible slopes of subbundles of rank $1\le s<r$ in pull-backs of $E$, divided by the degree of $f_m$, converge to the slope of $E$.

A generalization of this fact to the surface case, under various additional assumptions on the characteristic, existence of liftings, etc., was proven in \cite{KP}. The main aim of this paper is to provide such a generalization in all dimensions without any assumptions,
answering the conjecture posed in \cite[Remark 4.11]{KP}.

\medskip

Let $X$ be a smooth projective variety of dimension $d\ge 1$ defined over an algebraically closed field $k$ (of arbitrary characteristic).  A vector bundle $E$ on $X$ is called \emph{numerically flat} if both $E$ and $E^*$ are nef. We prove the following theorem:

\begin{theorem}\label{main-theorem}
Let $E$ be a numerically flat vector bundle of rank $r$. Then for 
any ample line bundle $A$ on $X$ and any	
$1\le s<r$ there exists a sequence of finite surjective morphisms $f_m:X_m\to X$ from smooth projective varieties, 
a sequence of filtrations of $f_m^*E$ by subbundles $$S_{0,m}=0\subset S_{1,m}\subset...\subset S_{r-1,m}\subset S_{r,m}=f_m^*E,$$  
and some rational numbers $\gamma_1,...,\gamma_r$ such  that for $i=1,...,r$ the quotients $S_{i,m}/S_{i-1,m}$ are line bundles and we have 
$$\frac{(f_m)_*c_1(S_{i,m}/S_{i-1,m})}{\deg f_m} =\frac{\gamma_i}{m} \cdot c_1(A)$$
in the rational Chow group $A^1(X)\otimes \QQ$. 	
\end{theorem}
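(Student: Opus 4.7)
First, I would reduce to positive characteristic by standard spreading out. The data $(X,A,E)$ descends to a finitely generated $\ZZ$-subalgebra $R\subset k$, and I would specialize to a closed point of $\Spec R$ whose residue field has characteristic $p>0$. Numerical flatness, ampleness of $A$, smoothness of $X$, and the existence of filtrations by subbundles with prescribed numerical invariants are constructible conditions that descend to the special fibre, while covers and filtrations lift back along smooth base change using smoothness of the relevant Hilbert, Quot and flag schemes; it therefore suffices to prove the theorem when the base field has characteristic $p>0$.

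In this setting, the result \cite[Theorem 3.9]{FL} cited in the introduction provides a finite surjective morphism $\pi\colon Y\to X$ from a smooth projective variety $Y$ such that $\pi^*E$ is algebraically equivalent to the trivial bundle $\mathcal{O}_Y^r$. Concretely there is a smooth connected curve $T$ and a vector bundle $\mathcal{E}$ on $Y\times T$ flat over $T$ with $\mathcal{E}_{t_0}\simeq \mathcal{O}_Y^r$ and $\mathcal{E}_{t_1}\simeq \pi^*E$ for two points $t_0, t_1\in T$. The trivial complete flag $0\subset \mathcal{O}_Y\subset \cdots\subset \mathcal{O}_Y^r$ at $t_0$ defines a point of the relative complete flag scheme $\Flag(\mathcal{E}/T)$, which is proper over $T$. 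A dimension count together with properness gives an irreducible component of $\Flag(\mathcal{E}/T)$ through that point surjecting onto $T$, and specialization at $t_1$ produces a complete flag filtration of $\pi^*E$ with line bundle quotients whose first Chern classes are numerically trivial on $Y$, since numerical invariants are locally constant in flat families.

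The remaining task is to promote this numerical triviality into the equation $(\pi_m)_*c_1(S_{i,m}/S_{i-1,m})/\deg \pi_m = a_m c_1(A)$ in $A^1(X)\otimes \QQ$ with $a_m\to 0$. I expect this to be achieved by iterating the construction with a sequence of covers $\pi_m$ of growing degree, obtained by composing $\pi$ with further Frobenius and Galois covers. These additional covers should be chosen so that, after pushforward and division by $\deg \pi_m$, the $\Pic^0$-part of the class of each line bundle quotient is forced into a subspace of $\Pic(X)\otimes\QQ$ that shrinks with $m$, while the remaining residue is aligned with the one-dimensional subspace $\QQ\cdot c_1(A)\subset A^1(X)\otimes \QQ$. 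The hardest step --- and in my view the technical heart of the theorem --- is exactly this final alignment: upgrading numerical triviality into an exact rational multiple of $c_1(A)$ in the rational Chow group, for each fixed $m$, while guaranteeing $a_m\to 0$.
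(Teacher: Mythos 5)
Your proposal has genuine gaps at every stage, and it misses the actual mechanism of the proof. First, the reduction to positive characteristic does not work: a finite surjective cover of the special fibre of a model over $\Spec R$ (in particular any cover built from Frobenius, which is what \cite[Theorem 3.9]{FL} produces) does not lift to the generic fibre, and neither do filtrations constructed on such a cover. This is exactly why the introduction of the paper states that the characteristic-zero analogue of the Deligne--Sullivan-type statement ``seems out of reach''; your spreading-out step would otherwise settle it. Second, the deformation argument is not valid: the object parametrizing \emph{global} filtrations of $\mathcal{E}_t$ by subbundles is the space of sections of the relative flag bundle, which is not proper over $T$, so neither properness nor a dimension count forces the component through the trivial flag at $t_0$ to dominate $T$; a flat deformation of $\mathcal{O}_Y^r$ need not admit any nontrivial subbundle. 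Third, the ``hardest step'' you defer is actually a non-issue: the paper records that a numerically trivial class in $A^1$ already vanishes in $A^1(X)\otimes\QQ$, so if your earlier steps worked you could take $a_m=0$; the difficulty you locate there is not where the difficulty lies.

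The paper's proof is uniform in the characteristic and entirely different. It works inside the full flag bundle $\pi\colon\Flag(E)\to X$, takes $X_m$ to be a general complete intersection of $\binom{r}{2}$ divisors in a carefully chosen very ample linear system $|L_{\underline{m},n}|$ (built from the Pl\"ucker line bundles twisted by $\pi^*A$), and computes $(f_m)_*c_1(S_{i,m}/S_{i-1,m})$ \emph{exactly} via the Darondeau--Pragacz push-forward formulas (Theorem \ref{DP-formula}): the leading term is a multiple of $c_1(E)$, hence vanishes, and the subleading term is an explicit rational multiple $a_m$ of $c_1(A)$ with $a_m=O(1/m)$. No reduction modulo $p$, no algebraic equivalence to the trivial bundle, and no deformation of flags is involved. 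If you want to salvage your outline you would need, at minimum, a characteristic-zero substitute for \cite[Theorem 3.9]{FL} and a genuine argument that flags deform along the family $\mathcal{E}$; neither is available.
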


This theorem  generalizes (and strengthens) the results of Parameswaran and Subramanian \cite{PS} from curves to higher-dimensional varieties.

In fact, we prove a more precise version of this theorem, taking into account the behaviour of 
the S-fundamental group scheme (see Corollary \ref{cor-flag}), which contains information about all numerically flat vector bundles on a given variety. Along the way, we also prove  
some results related to the homotopy exact sequence for the S-fundamental group scheme (see Section 2).

The main new idea in the proof of the above theorem is a different choice of a general complete 
intersection in the flag or Grassmann scheme, along with the use of some push-forward formulas for flag bundles of Darondeau and Pragacz (see \cite{DP}).

\medskip

The structure of the paper is as follows. Section 1 contains some preliminaries. In Section 2 we prove several new results on the S-fundamental group scheme. Section 3 contains an easy version of Theorem 
\ref{main-theorem} that deals with subbundles of a given rank (this is a direct generalization of 
\cite[Theorem 4.1]{PS} to higher dimensions). Section 4 contains the proof of Theorem \ref{main-theorem}.
In Section 5 we prove a strong restriction theorem for bundles that can be approximated as in the statement of Theorem \ref{main-theorem}. This generalizes \cite[Theorem 3.4]{KP} that deals with the rank $2$ case in positive characteristic.

\subsection*{Notation}

A vector bundle $E$ is a locally free sheaf of finite rank. A subbundle $F\subset E$ is a locally free sheaf such that the quotient $E/F$ is also locally free.

If $X$ is a projective variety then we write $\Pic X$ for the Picard group of $X$ and $A^1 (X)$ for the group of codimension $1$ cycles on $X$ modulo algebraic equivalence. If $X$ is smooth the first Chern class $c_1: \Pic X\to A^1(X)$ defines an isomorphism of groups.  A class $\alpha \in A^1(X)$ is \emph{numericallly trivial} if $\int_X \alpha \cdot [C]=0$ for every projective curve $C\subset X$. In this case we write $\alpha \equiv 0$.
Let us recall that  a class $\alpha \in A^1(X)$  is numerically trivial if and only if the corresponding class  in $A^1(X)\otimes \QQ$ vanishes.

\section{Preliminaries}

\subsection{Numerically flat vector bundles}

Let $X$ be a smooth projective variety of dimension $d\ge 1$
defined over an algebraically closed field $k$.

Let us now fix an ample divisor $H$.  We say that $E$ is \emph{strongly slope $H$-semistable} if either $k$ has characteristic $0$ and $E$ is slope $H$-semistable, or $k$ has positive characteristic and all Frobenius pull backs $(F_X^n)^*E$ are slope $H$-semistable.

Let us recall the following characterization of numerically flat vector bundles:

\begin{theorem}\label{num-flat}
	Let $H$ be a fixed ample divisor on $X$ and let $E$ be a vector bundle on $X$. Then the following conditions are equivalent:
	\begin{enumerate}
		\item $E$ is numerically flat,
		\item $E$ is strongly slope $H$-semistable and $\int_X (\ch  _1(E) \cdot H^{d-1})=\int_X (\ch  _2(E) \cdot H^{d-2})=0$,
		\item $E$ is nef with $c_1(E)\equiv 0$.
	\end{enumerate}
\end{theorem}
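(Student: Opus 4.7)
My plan is to prove the three conditions equivalent via the cycle $(1)\Rightarrow(3)\Rightarrow(2)\Rightarrow(1)$, with the last implication being by far the most substantial.

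\emph{The easy directions.} For $(1)\Rightarrow(3)$: if both $E$ and $E^*$ are nef, then the line bundles $\det E$ and $(\det E)^{-1}=\det E^*$ are both nef, so $c_1(E)$ intersects every curve both non-negatively and non-positively, hence $c_1(E)\equiv 0$. For $(3)\Rightarrow(2)$: assuming $E$ nef with $c_1(E)\equiv 0$, any torsion-free quotient $Q$ of $E$ is locally free in codimension one and a genuine nef quotient bundle on that locus; moving a complete intersection representative of $H^{d-1}$ away from the small locus shows $c_1(Q)\cdot H^{d-1}\geq 0$, giving slope $H$-semistability with $\mu_H(E)=0$. Since Frobenius pull-backs preserve nefness and leave the vanishing of $c_1$ intact, the same argument iterated gives strong $H$-semistability. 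The Chern class conditions then follow from $c_1(E)\equiv 0$ together with the Fulton--Lazarsfeld nefness of Schur polynomials in Chern classes of a nef bundle, which yields $c_2(E)\cdot H^{d-2}\geq 0$ and $(c_1(E)^2-c_2(E))\cdot H^{d-2}\geq 0$; combined with $c_1(E)^2\cdot H^{d-2}=0$ these force $c_2(E)\cdot H^{d-2}=0$ and hence $\int_X\ch_2(E)\cdot H^{d-2}=0$.

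\emph{The main implication $(2)\Rightarrow(1)$.} In terms of Chern classes the hypotheses become $c_1(E)\cdot H^{d-1}=0$ and $\Delta(E)\cdot H^{d-2}=0$ for the discriminant $\Delta(E)=2r\,c_2(E)-(r-1)c_1(E)^2$. Restricting $E$ to a general smooth complete intersection curve $C$ cut out by divisors in $|nH|$ for $n\gg 0$, the Mehta--Ramanathan theorem together with Langer's strong version of it show that $E|_C$ is strongly semistable of degree $0$, which on a curve is equivalent to numerical flatness. To lift this to numerical flatness on $X$ one invokes the structure theory for strongly semistable sheaves with vanishing discriminant: $E$ admits a Jordan--H\"older-type filtration whose successive quotients are locally free, strongly stable, and inherit the numerical conditions $c_1\cdot H^{d-1}=0$ and $\Delta=0$. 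Such stable factors are shown to be numerically flat --- via projective flatness in characteristic zero and via the characterization of stable objects in the S-fundamental category in arbitrary characteristic --- and numerical flatness is closed under extensions, so $E$ itself is numerically flat.

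The main obstacle is precisely this last step, promoting the ``generic'' information (strong $H$-semistability and $\Delta\cdot H^{d-2}=0$) to the global positivity statement that both $E$ and $E^*$ are nef on $X$. This is where the serious input enters: Langer's restriction theorems to ensure that strong semistability and the numerical invariants survive restriction to a general complete intersection curve, and the structure theory of the S-fundamental group scheme to conclude numerical flatness of each Jordan--H\"older factor from strong stability together with vanishing of the discriminant.
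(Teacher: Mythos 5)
Note first that the paper does not actually prove Theorem \ref{num-flat}: it is quoted from the literature (\cite{Na} in characteristic zero, \cite[Proposition 5.1]{La1} in positive characteristic, and \cite{FL} for a characteristic-free algebraic proof), so there is no internal argument to compare yours against. Judged on its own, your cycle is sensibly organized and the implications $(1)\Rightarrow(3)$ and $(3)\Rightarrow(2)$ are correct: the only points to make explicit are that $\int_X c_1(E)^2H^{d-2}=0$ follows from $c_1(E)\equiv 0$, and that the Fulton--Lazarsfeld positivity of Schur polynomials for nef bundles is available in arbitrary characteristic (via Fulton's cone-class argument), since the classical references work over $\CC$.

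The problems are concentrated in $(2)\Rightarrow(1)$. First, the opening ``reformulation'' is not one: since $\Delta(E)=c_1(E)^2-2r\ch_2(E)$, the hypothesis $\int_X\ch_2(E)H^{d-2}=0$ only gives $\int_X\Delta(E)H^{d-2}=\int_Xc_1(E)^2H^{d-2}$, and to see that this vanishes --- and, more importantly, that $c_1(E)\equiv 0$, which you need anyway for $E^*$ to have a chance of being nef --- you must combine the Bogomolov inequality for strongly semistable sheaves (itself a nontrivial theorem of Langer in positive characteristic) with the equality case of the Hodge index theorem. Second, and more seriously, the final step is circular exactly where the difficulty lies. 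Nefness of $E$ requires controlling $E|_C$ for \emph{every} curve $C\subset X$, whereas all of your inputs (Mehta--Ramanathan, slopes, the discriminant) only see general complete intersection curves; restricting to such curves therefore proves nothing about nefness, and your appeal to ``the characterization of stable objects in the S-fundamental category'' assumes the conclusion, because that category is by definition the category of numerically flat bundles. The missing idea, which is the actual content of \cite[Proposition 5.1]{La1}, is a restriction theorem to \emph{arbitrary} curves: a strongly $H$-semistable sheaf with $\int_X\Delta(E)H^{d-2}=0$ is locally free and its pullback under every morphism from a smooth curve is strongly semistable; combined with $c_1(E)\equiv 0$ this makes $E|_C$ strongly semistable of degree $0$, hence nef, on every curve, and likewise for $E^*$. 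Without this (or the boundedness of the family of Frobenius pull-backs underlying it) your argument does not close in positive characteristic; in characteristic zero the route through Hermitian--Einstein metrics and projective flatness does work, but it is substantial analytic input (Donaldson--Uhlenbeck--Yau plus the equality case of the L\"ubke inequality) that should be cited rather than gestured at.
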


This follows from  \cite{Na} in the characteristic zero case and 
\cite[Proposition 5.1]{La1} in the positive characteristic case. See also \cite{FL} for a new algebraic proof of this theorem that is independent of the characteristic.

\subsection{Ampleness for nef vector bundles}

\medskip

The following lemma follows from the proof of \cite[Proposition 2.9]{Vi}. We recall its proof for the convenience of the reader.

\begin{lemma}\label{Viehweg}
Let $X$ be a projective scheme and let $E$ be a nef vector bundle on $X$. Let $\pi: \PP(E)\to X$ denote the projectivisation of $E$.
	Then for any ample line bundle $A$ on $X$ and  any positive integer $m$, the line bundle $L=\cO_{\PP (E)}(m)\otimes \pi^*A$ is ample. 
\end{lemma}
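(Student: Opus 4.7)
The plan is to verify the Nakai--Moishezon criterion for $L$ on the projective variety $\PP(E)$. Since $E$ is nef, the tautological class $\xi := c_1(\cO_{\PP(E)}(1))$ is nef on $\PP(E)$, and $\pi^*A$ is clearly nef. Writing $L \equiv m\xi + \pi^*A$, for any irreducible subvariety $V\subset \PP(E)$ of dimension $k\ge 1$ I would expand
\[
L^k \cdot V \;=\; \sum_{j=0}^{k} \binom{k}{j} m^{k-j}\, \xi^{k-j} \cdot (\pi^*A)^j \cdot V,
\]
which displays $L^k \cdot V$ as a sum of non-negative intersection numbers. It therefore suffices to exhibit one summand that is strictly positive.

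Set $W := \pi(V)$ and $\ell := \dim W$, and focus on the term with $j = \ell$. In the extreme case $\ell = k$, the map $\pi|_V : V \to W$ is generically finite and the projection formula gives $(\pi^*A)^k \cdot V = (\deg \pi|_V)\,(A^k \cdot W) > 0$ by ampleness of $A$ on $W$. In the extreme case $\ell = 0$, $V$ is contained in a single fibre $\PP(E_w)\simeq \PP^{r-1}$ on which $\xi$ restricts to $\cO(1)$, so $\xi^k \cdot V = \deg V > 0$.

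For the intermediate case $0 < \ell < k$, the core computation is to show $\xi^{k-\ell}\cdot (\pi^*A)^\ell\cdot V > 0$. I would take $N$ large enough that $NA$ is very ample, choose general $H_1,\dots,H_\ell \in |NA|$, and invoke Bertini and Kleiman transversality to ensure that $H_1\cap\dots\cap H_\ell$ meets $W$ in a non-empty reduced zero-dimensional subscheme $\{w_1,\dots,w_s\}$ contained in the open locus over which the fibres $F_i := V \cap \pi^{-1}(w_i)$ of $\pi|_V$ have the expected dimension $k-\ell$ (using generic flatness). Then, as cycles,
\[
N^{\ell}\,(\pi^*A)^\ell \cdot V \;=\; \sum_{i=1}^{s} [F_i],
\]
and on each fibre $\pi^{-1}(w_i)\simeq \PP^{r-1}$ the class $\xi$ restricts to $\cO(1)$, so $\xi^{k-\ell}\cdot [F_i] = \deg F_i > 0$.

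The main potential obstacle is this Bertini/moving step: one has to guarantee simultaneously that the $H_i$ cut $W$ in a reduced $0$-cycle, that the cutting points avoid the locus of jumping fibre dimension of $\pi|_V$, and that $\pi^{*}H_1,\dots,\pi^{*}H_\ell$ intersect $V$ properly so that the identification of cycles above is valid. All of this is standard once $N$ is taken large enough and the $H_i$ are chosen generically, and uses no hypothesis beyond nefness of $E$ and ampleness of $A$. Combining the three cases verifies Nakai--Moishezon and shows that $L$ is ample.
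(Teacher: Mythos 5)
Your argument is correct, but it runs along a genuinely different road than the paper's. The paper verifies Seshadri's criterion: it bounds $\deg L|_C$ against $\mult_x C$ uniformly for all irreducible curves $C\subset\PP(E)$, splitting only into the two cases ``$C$ contracted by $\pi$'' (where $\cO_{\PP(E)}(m)|_C$ already dominates the multiplicity inside a fibre $\PP^{r-1}$) and ``$C$ not contracted'' (where Seshadri's constant for $A$ on $X$ does the work). You instead verify Nakai--Moishezon, which forces you to treat subvarieties $V$ of every dimension and, in the intermediate case $0<\ell<k$, to carry out a Bertini/generic-flatness argument to realize $(\pi^*A)^\ell\cdot V$ by actual fibres of $\pi|_V$. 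That step is fine as written, but note you can bypass it entirely with the projection formula: $\xi^{k-\ell}\cdot(\pi^*A)^\ell\cdot V=A^\ell\cdot\pi_*\bigl(\xi^{k-\ell}\cdot[V]\bigr)$, and $\pi_*\bigl(\xi^{k-\ell}\cdot[V]\bigr)=d\,[W]$ where $d>0$ is the degree of the generic fibre of $V\to W$ in $\PP^{r-1}$; this also subsumes your two extreme cases. The trade-off is clear: the paper's Seshadri route is shorter and needs only curves, while your route is a standard intersection-theoretic template that generalizes readily (e.g., to showing $m\xi+\pi^*A$ is ample for any nef-plus-positivity situation) at the cost of the dimension bookkeeping.
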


\begin{proof}
	By Seshadri's criterion of ampleness (see, e.g., \cite[Theorem 1.4.13]{Laz}), it is sufficient to prove that there exists $\epsilon >0$ such that for any irreducible curve $C$ on $\PP(E)$
	and every point $x\in C$ we have $\deg L_C>\epsilon \cdot \mult _xC$.
	
	If $\pi(C)$ is a point then $C$ is contained in $\pi^{-1}(\pi (C))\simeq \PP^{r-1}$. Let $d(C)$ denote the degree of $C$ in $\PP^{r-1}$. Then 
	  $$\deg L_C=\deg \cO_{\PP (E)}(m)_C=m\cdot d(C) \ge d(C)\ge {\mult} _xC.$$
	
Again using Seshadri's criterion of ampleness we know that there exists  $\epsilon '>0$ such that for any irreducible curve $C'$ on $X$ we have $\deg A_{C'}>\epsilon '\cdot \mult _xC'$ for all $x\in X$.
If $\pi (C)$ is not contracted to a point then its image is an irreducible curve $C'$. Then for every point $x\in \PP(E)$ we have
  $$\deg L_C\ge \deg (\pi^*A)_C=\deg A_{C'}>\epsilon '\cdot \mult _{\pi (x)}C'\ge \epsilon ' \cdot \mult _x C. $$
  So we can take $\epsilon =\min (\epsilon ', 1)$.
\end{proof}

\subsection{Universal push-forward formulas for full flag varieties}\label{Subsection-DP}

Let $E$ be a rank $r$ vector bundle on a smooth variety $X$ defined over an algebraically closed field.

Let $\pi: \Flag (E)\to X$ denote the full flag bundle of $E$ of subbundles of rank $1,...,r-1$ and let 
$S_0=0\subset S_1 \subset S_2\subset...\subset S_{r-1}\subset \pi^*E$ be the universal flag on $\Flag (E)$.
For $i=1,..., r-1$ we set  $\xi _i=-c_1 (S_{r-i}/S_{r-1-i})$.

For a monomial $m$ and a polynomial $f$ we denote by $[m](f)$ the coefficient of $m$ in $f$.
We will need the following special case of the  Darondeau--Pragacz formula (see \cite[Theorem 1.1]{DP}):

\begin{theorem}\label{DP-formula}
If $c_1 (E)=0$ in $A^1(X)$ then for any homogeneous polynomial $f\in k[t_1,...,t_{r-1}]$ 
we have
$$\pi_*f(\xi_1,...,\xi_{r-1})=\left\{\begin{array}{ll}
		\left[\prod _{i=1}^{r-1}t_i^{r-1}\right]\left(  f(t_1,...,t_{r-1})\prod _{1\le i<j\le r-1} (t_i-t_j)\right)[X]&\hbox{if $f$ has degree $\binom {r}{2}$,} \\
	0&\hbox{if $f$ has degree $\binom{r}{2}+1$.}\\
	\end{array}\right.
$$
\end{theorem}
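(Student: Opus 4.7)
The plan is to combine a codimension count with the classical integration formula on the complete flag variety.

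First, I would note that by iterating the standard projective-bundle push-forward $p_{\ast}(c_{1}(\cO(1))^{k}) = s_{k-\rk+1}$ along the tower
\[
\Flag(E) \to \Flag_{1,\ldots,r-2}(E) \to \cdots \to \PP(E) \to X,
\]
the class $\pi_{\ast} f(\xi_{1},\ldots,\xi_{r-1})$ becomes a polynomial with integer coefficients in the Chern classes of $E$, and these coefficients depend only on $f$ and $r$, not on $E$ or $X$. Because $\pi$ has relative dimension $\binom{r}{2}$, the output has pure codimension $\deg f - \binom{r}{2}$ in $A^{\ast}(X)$.

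This already dispatches the second case: if $\deg f = \binom{r}{2}+1$, then $\pi_{\ast}f$ lies in $A^{1}(X)$, and the only codimension-$1$ Chern class of $E$ is $c_{1}(E)$, which vanishes by hypothesis; hence $\pi_{\ast}f = 0$.

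For the first case, $\deg f = \binom{r}{2}$, the push-forward has codimension $0$ and therefore equals $C_{f}\cdot[X]$ for some integer $C_{f}$ depending only on $f$ and $r$ (any positive-codimension Chern class would shift the total codimension). To pin down $C_{f}$, I would specialize to the universal situation $X = \Spec k$ with $E = k^{r}$ trivial, so that $\Flag(E)$ becomes the ordinary complete flag variety and $\pi_{\ast}$ reduces to integration against the fundamental class. The claim then reduces to the purely combinatorial identity
\[
\int_{\Flag(k^{r})} f(\xi_{1},\ldots,\xi_{r-1}) = \left[\prod_{i=1}^{r-1} t_{i}^{r-1}\right]\!\left(f(t) \prod_{1 \leq i<j \leq r-1}(t_{i}-t_{j})\right),
\]
expressing integration on $\Flag(k^{r})$ in terms of the Chern roots of the tautological flag. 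I would prove this by induction on $r$, peeling off the outermost projectivization and tracking how the Vandermonde factor transforms when the corresponding variable is eliminated; alternatively, it follows directly from \cite[Theorem 1.1]{DP} once one specializes to $c_{1}(E)=0$ and to the top relative degree. I expect the combinatorial bookkeeping of this final Vandermonde identification to be the main technical obstacle; the codimension arguments that handle both cases are otherwise formal.
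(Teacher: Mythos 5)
Your argument is correct, and it is genuinely more self-contained than the paper's, whose entire proof is a direct appeal to the universal Gysin formula: since $\Flag(E)$ has relative dimension $\binom{r}{2}$ over $X$, \cite[Theorem 1.1]{DP} immediately returns the stated Vandermonde coefficient times $[X]$ when $\deg f=\binom{r}{2}$ and a certain explicit multiple of $c_1(E)$ when $\deg f=\binom{r}{2}+1$, the latter vanishing by hypothesis. You instead establish only the qualitative shape of the answer --- a universal integer polynomial in the Chern classes of $E$ of the right codimension, obtained by iterating the projective-bundle Segre-class formula along the flag tower --- which handles the second case exactly as the paper does (the only universal codimension-one class is an integer multiple of $c_1(E)$), and then you pin down the codimension-zero constant by specializing to the trivial bundle over a point. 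This reduces the first case to the classical integration formula on the absolute flag variety $\Flag(k^r)$, which is precisely the $X=\Spec k$ instance of the Darondeau--Pragacz formula and can also be verified by induction on the tower as you propose. What your route buys is independence from the precise Segre-class corrections in \cite{DP}: only the classical Vandermonde coefficient identity over a point is needed, at the cost of the combinatorial bookkeeping you flag at the end. What the paper's route buys is brevity. If you carry out the induction, the one place to be careful is the sign convention: the classes $\xi_i=-c_1(S_{r-i}/S_{r-1-i})$ are relative hyperplane classes in the \emph{subbundle} convention at each stage, so $p_*\xi^k$ is a Segre class of the relevant quotient bundle (e.g.\ for $r=3$ one finds $\int\xi_1^2\xi_2=-1$ while $\int\xi_1\xi_2^2=+1$, consistent with the coefficient formula); a dual convention would flip signs and break the match with the Vandermonde factor.
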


\begin{proof}
Let us recall that $\Flag (E)$  has relative dimension $\binom {r} {2}$ over $X$. So \cite[Theorem 1.1]{DP} allows us to compute $\pi_*f(\xi_1,...,\xi_{r-1})$ as the above multiple of $[X]$ if $f$ has degree $\binom {r}{2}$ and as a certain multiple of $c_1(E)$ if $f$ has degree $\binom{r}{2}+1$. This last class vanishes by our assumption.
\end{proof}

\section{Some properties of the S-fundamental group scheme}

Let $X$ be a connected reduced proper scheme over an algebraically closed field $k$
and let $x$ be a fixed $k$-point of $X$. Let $\cC^{\nf} (X)$ denote the category of numerically flat vector bundles on $X$ with the fiber functor $\omega_x: \cC^{\nf} (X)\to \Vect _k$
given by sending $E$ to the fiber  $E(x)$.
Together with tensor product and the unit $\cO_X$ this forms a neutral Tannakian category.
Then one defines the \emph{S-fundamental group scheme} $\pi_1^S(X,x)$ with base point $x$ as the affine $k$-group scheme which is Tannaka dual to this category (cf. \cite[Definition 2.1]{La2}).

With the above notation we have the following lemma.

\begin{lemma}\label{injectivity} 
For any numerically flat vector bundle $E$ on $X$ the canonical morphism $\Gamma (X, E)\otimes _k \cO_X\to E$ is injective and its image is the maximal trivial subbundle of $E$.
\end{lemma}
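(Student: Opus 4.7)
The plan is to deduce the lemma directly from the Tannakian setup recalled just above. Under the equivalence of $\cC^{\nf}(X)$ with the category $\mathrm{Rep}_k(\pi)$ of finite-dimensional representations of $\pi:=\pi_1^S(X,x)$ provided by the fiber functor $\omega_x$, the unit object $\cO_X$ corresponds to the trivial representation and $E$ corresponds to the representation of $\pi$ on $V:=E(x)$. Since $\cC^{\nf}(X)$ sits as a full subcategory of the category of $\cO_X$-modules, the evaluation $\omega_x$ at $x$ implements a sequence of natural identifications
\[
\Gamma(X,E)\;=\;\Hom_{\cO_X}(\cO_X,E)\;=\;\Hom_{\cC^{\nf}(X)}(\cO_X,E)\;\cong\;\Hom_{\pi}(k,V)\;=\;V^{\pi}.
\]

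With this identification in hand, the canonical morphism $\phi\colon\Gamma(X,E)\otimes_k\cO_X\to E$ lies in $\cC^{\nf}(X)$ (its source is trivial, hence numerically flat) and, transported through the equivalence, becomes the tautological inclusion $V^{\pi}\otimes_k k=V^{\pi}\hookrightarrow V$ of the subspace of $\pi$-invariants. Since $\omega_x$ is exact and faithful, the injectivity of this inclusion forces $\ker\phi=0$, settling the first claim.

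For the second claim, $V^{\pi}\subset V$ is tautologically the maximal trivial subrepresentation: any subrepresentation isomorphic to $k^{n}$ is fixed pointwise by $\pi$ and therefore lies in $V^{\pi}$, while $V^{\pi}$ is itself trivial. Transporting back, this subobject corresponds to a subbundle $T\subset E$ isomorphic to $\cO_X^{\dim V^{\pi}}$ containing every trivial subbundle of $E$, i.e.\ the maximal trivial subbundle. To match $\im\phi$ with $T$ at the sheaf level, I would observe that $\phi$ factors through the inclusion $T\hookrightarrow E$ and that the induced map $\Gamma(X,E)\otimes_k\cO_X\to T$ is an isomorphism of trivial bundles (both have global sections equal to $V^{\pi}$ and the map is the identity on global sections). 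The one point requiring care, which I do not expect to create any real obstacle, is the compatibility between kernels and images computed in the abelian category $\cC^{\nf}(X)$ and those computed in the category of $\cO_X$-modules; this is precisely what the Tannakian formalism guarantees.
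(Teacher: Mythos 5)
Your proof is correct, but it takes a genuinely different route from the paper's. The paper argues elementarily: it lets $K$ be the sheaf-theoretic kernel of the evaluation map, notes $\Gamma(X,K)=0$ and that $K$ is numerically flat because $\cC^{\nf}(X)$ is abelian, and then shows by a minimal-rank induction that a numerically flat subsheaf of a trivial bundle with no nonzero sections must vanish; maximality of the image follows since any trivial subsheaf of $E$ is generated by global sections of $E$. You instead pass through the full Tannakian equivalence with $\mathrm{Rep}_k(\pi_1^S(X,x))$ and identify the evaluation map with the inclusion of invariants $V^{\pi}\hookrightarrow V$, which makes both injectivity and maximality tautological on the representation side. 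This is cleaner conceptually, but it leans on more of the machinery from \cite{La1}: exactness and faithfulness of $\omega_x$ and, crucially, the point you flag at the end. One correction there: the compatibility of kernels and images in $\cC^{\nf}(X)$ with those in $\Mod{\cO_X}$ is \emph{not} something the abstract Tannakian formalism guarantees (Tannaka duality knows nothing about how the abelian category sits inside $\cO_X$-modules); it is an input, established in \cite{La1} when proving that $\cC^{\nf}(X)$ is abelian, namely that morphisms of numerically flat bundles have constant rank so that their sheaf-theoretic kernels, images and cokernels are again numerically flat bundles. Since the paper's own proof silently uses the same fact (in the step ``$K$ is a numerically flat vector bundle''), this is a misattribution rather than a gap, but you should cite the correct source for it. With that repaired, your argument stands; a minor simplification is that your final paragraph matching $\im\phi$ with $T$ on the sheaf level is redundant once the kernel/image compatibility is in place, since the image of $\phi$ then corresponds directly to the image of $V^{\pi}\hookrightarrow V$.
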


\begin{proof}
Let $K$ be the kernel of the evaluation map $\Gamma (X, E)\otimes _k \cO_X\to E$.
By construction we have $\Gamma (X, K)=0$. Since  the category $\cC^{\nf}(X)$ is abelian, $K$ is a numerically flat vector bundle.

We claim that any numerically flat subbundle of the trivial bundle 
with no non-zero sections is zero. Otherwise, we choose a bundle $K'$ of the smallest positive rank with these properties. There exists a non-zero map $K'\to \cO_X$. Since the kernel of this map is still a numerically flat subbundle of the trivial bundle, it is $0$ by our assumption on the rank. But then $K'\to \cO_X$ is an isomorphism, contradicting our assumption $\Gamma (X, K')=0$. This shows that $K=0$. 

Since sections of every subsheaf of $E$ are contained in 
$\Gamma (X, E)$ and $\Gamma (X, E)\otimes _k \cO_X\to E$ is injective, the maximality of the image is clear. 
\end{proof}

\medskip

Let us recall that a morphism of schemes $f: Y\to X$ is called \emph{separable} if it is flat and for every $x\in X$ the fiber $Y_x$ is geometrically reduced over $\kappa (x)$ (see \cite[Expos\'e X, D\'efinition 1.1]{SGA}).
Let us warn the reader that this notion is often used to talk about dominant morphisms of $k$-varieties  $f: Y\to X$ 
for which the corresponding field extension $k(X)\to k(Y)$ is separably generated. This is equivalent to saying that $f$ is separable (in SGA1 sense) over the generic point of $X$ and in this case we say that $f$ is \emph{generically separated}. 
Note that any dominant morphism of $k$-varieties is generically separated if $k$ has characteristic zero.  On the other hand, a finite morphism of smooth projective curves is separable if and only if it is \'etale.

\medskip

Let us recall the following lemma (see \cite[Lemma 8.1]{La1} ).

\begin{lemma}\label{Lemma-8.1}
Let $f: Y\to X$ be a surjective separable morphism of proper varieties over an algebraically closed field $k$.  If $f_*\cO_Y=\cO_X$ then for any $y\in Y(k)$ the natural map $\pi_1^S(Y,y)\to \pi_1^S(X, f (y))$ is faithfully flat.
\end{lemma}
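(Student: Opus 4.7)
The plan is to verify the Tannakian criterion characterising faithful flatness: the induced map $\phi:\pi_1^S(Y,y)\to\pi_1^S(X,f(y))$ is faithfully flat iff the pullback tensor functor $f^*:\cC^{\nf}(X)\to\cC^{\nf}(Y)$ is fully faithful and its essential image is closed under subobjects, meaning every numerically flat subbundle $F\subset f^*E$ descends to a subbundle $F'\subset E$ with $f^*F'=F$.

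Full faithfulness is immediate from the projection formula. Since $f$ is flat (separable morphisms are flat) and $f_*\cO_Y=\cO_X$, setting $G=E_1^\vee\otimes E_2$ one has
\begin{equation*}
\Hom_Y(f^*E_1,f^*E_2)=\Gamma(X,f_*f^*G)=\Gamma(X,G\otimes f_*\cO_Y)=\Gamma(X,G)=\Hom_X(E_1,E_2).
\end{equation*}

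For the descent of subobjects, let $F\subset f^*E$ be numerically flat of rank $s$. It determines an $X$-morphism $g:Y\to\Gras(s,E)$; write $Z\subset\Gras(s,E)$ for its scheme-theoretic image and $p:Z\to X$ for the structural projection. Pushing the inclusion $\cO_Z\hookrightarrow g_*\cO_Y$ forward by $p$ gives $p_*\cO_Z\hookrightarrow f_*\cO_Y=\cO_X$; since this inclusion has the unit $\cO_X\to p_*\cO_Z$ as a section it is an isomorphism, so $p_*\cO_Z=\cO_X$. Once $p$ is known to be finite, this yields $Z=\mathbf{Spec}_X(p_*\cO_Z)=X$, so $g$ factors through a section of $\Gras(s,E)\to X$, equivalently a rank-$s$ subbundle $F'\subset E$ with $f^*F'=F$; numerical flatness of $F'$ then follows, since nefness of $F'$ and $(F')^\vee$ is detected after the surjective pullback by $f$.

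The crux is therefore to show that $p$ has $0$-dimensional fibres, which I would verify fibrewise. Separability of $f$ combined with $f_*\cO_Y=\cO_X$ implies, via Stein factorisation, that every fibre $Y_x$ is a connected reduced proper $k$-scheme; flatness makes $F|_{Y_x}$ locally free, and restrictions of the nef bundles $F$ and $F^\vee$ remain nef on $Y_x$, so $F|_{Y_x}$ is numerically flat. It thus suffices to prove the following sublemma: \emph{any numerically flat subbundle $G$ of a trivial bundle $\cO_C^n$ on a connected reduced proper $k$-scheme $C$ is of the form $V\otimes\cO_C$ for some subspace $V\subset k^n$}. For this, set $V=\Gamma(C,G)\subset k^n$; by Lemma \ref{injectivity}, $V\otimes\cO_C$ embeds as the maximal trivial subbundle of $G$. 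Choosing a decomposition $k^n=V\oplus W$, the projection $\cO_C^n\twoheadrightarrow V\otimes\cO_C$ along $W\otimes\cO_C$ restricts to a retraction of $V\otimes\cO_C\hookrightarrow G$, so $G\cong V\otimes\cO_C\oplus K$ with $K\subset W\otimes\cO_C$ numerically flat and $\Gamma(C,K)=0$; the claim proved inside Lemma \ref{injectivity} (a numerically flat subbundle of a trivial bundle with no non-zero sections vanishes) then forces $K=0$. Applied to $G=F|_{Y_x}$ this gives $F|_{Y_x}=V_x\otimes\cO_{Y_x}$ for some $V_x\subset E(x)$, so $g(Y_x)=\{[V_x]\}$ is a single point and $g$ has $0$-dimensional fibres, completing the argument.
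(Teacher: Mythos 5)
The paper does not actually prove this statement: it is quoted verbatim from \cite[Lemma 8.1]{La1}, with only the remark that geometric reducedness of the fibres is needed in the proof even though the original formulation omitted it. Your argument is a correct, essentially self-contained reconstruction, and it follows the natural route: the Tannakian criterion \cite[Proposition 2.21 (a)]{DM} reduces the claim to full faithfulness of $f^*$ (projection formula, using $f_*\cO_Y=\cO_X$) plus descent of subobjects, and the descent is obtained by trivializing a numerically flat subbundle on each fibre and then globalizing via the scheme-theoretic image in $\Gras(s,E)$. Your fibrewise sublemma is exactly where separability enters ($Y_x$ geometrically reduced and connected gives $\Gamma(Y_x,\cO_{Y_x})=k$ and lets Lemma \ref{injectivity} apply), which matches the paper's remark about why that hypothesis is needed. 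Three small points you should make explicit, none of which is a real gap: (i) a subobject of $f^*E$ in $\cC^{\nf}(Y)$ is automatically a subbundle with locally free quotient (this follows from the structure of the category of numerically flat sheaves in \cite{La1} and is needed before you can form the classifying map $g:Y\to\Gras(s,E)$); (ii) you verify that fibres of $p$ over closed points are finite, and you should invoke upper semicontinuity of fibre dimension to cover non-closed points, together with properness of $p$ (which holds because $Z$ is closed in $\Gras(s,E)$, proper over $X$) to conclude that $p$ is finite; (iii) at the end one also wants $E/F'$ numerically flat so that $F'$ is a subobject in $\cC^{\nf}(X)$, which follows since $f^*(E/F')=f^*E/F$ is numerically flat and nefness descends along the surjection $f$, as in the paper's proof of Theorem \ref{homotopy-exact-sequence}. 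With those clarifications the proof is complete and gives an argument the present paper only cites.
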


Note that the formulation of  \cite[Lemma 8.1]{La1} does not contain the assumption that fibers are geometrically reduced but it is used in the proof of this lemma. 

The main aim of this section is to prove the following result.

\begin{theorem}\label{homotopy-exact-sequence}
Let $f: Y\to X$ be a surjective separable morphism of proper varieties over an algebraically closed field $k$. Assume that $f_*\cO_Y=\cO_X$ and 
for a general $k$-point $x$ of $X$ we have $\pi_1^S(Y_x, y)=0$ for some $k$-point $y$ of the fiber $Y_x$.
Then the natural map $\pi_1^S(Y,y)\to \pi_1^S(X, f (y))$ is an isomorphism.
\end{theorem}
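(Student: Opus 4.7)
My plan is to combine the faithful flatness supplied by Lemma \ref{Lemma-8.1} with a Tannakian closed-immersion check. By the standard criterion for closed immersions of affine group schemes, the latter amounts to showing that every numerically flat bundle $E$ on $Y$ is a subquotient of $f^*F$ for some numerically flat $F$ on $X$. I would actually prove more: that the adjunction $f^*f_*E\to E$ is an isomorphism and $f_*E$ itself is numerically flat, so that $E$ is in fact isomorphic to the pullback of a numerically flat bundle.

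The main technical step would be to show that $x\mapsto h^0(Y_x,E|_{Y_x})$ is the constant function $r:=\rk E$. First I would verify that every fiber $Y_x$ is a connected reduced proper scheme on which Lemma \ref{injectivity} applies: reducedness comes from separability of $f$, while connectedness comes from $f_*\cO_Y=\cO_X$ being preserved under flat base change with geometrically reduced fibers. Moreover $E|_{Y_x}$ is numerically flat since nef bundles pull back to nef bundles. For $x$ in the dense open $U\subset X$ with $\pi_1^S(Y_x)=0$, Tannaka duality forces $E|_{Y_x}$ to be trivial, giving $h^0=r$ on $U$. On the other hand Lemma \ref{injectivity} supplies the universal upper bound $h^0(Y_x,E|_{Y_x})\le r$ on every fiber. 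Upper semicontinuity makes the closed locus $\{x:h^0\ge r\}$ contain the dense $U$, hence equal $X$, so $h^0=r$ everywhere.

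With $h^0$ constant, Grauert's theorem gives $f_*E$ locally free of rank $r$ on $X$ and forces the adjunction $f^*f_*E\to E$ to restrict, on each fiber $Y_x$, to the evaluation map $H^0(Y_x,E|_{Y_x})\otimes\cO_{Y_x}\to E|_{Y_x}$, which is injective by Lemma \ref{injectivity} and, between locally free sheaves of the same rank $r$, therefore an isomorphism. A fiberwise-to-global argument (equivalently, Nakayama applied stalkwise on $Y$) upgrades this to a global isomorphism $f^*f_*E\cong E$. Finally $f_*E$ is numerically flat because both $f^*(f_*E)=E$ and its dual are nef on $Y$, and nefness of a vector bundle descends under the proper surjection $f$ via the induced surjection of projectivizations $\PP(f^*(f_*E))\to \PP(f_*E)$. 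Thus $f^*:\cC^{\nf}(X)\to\cC^{\nf}(Y)$ is essentially surjective, which secures the closed-immersion half and, combined with Lemma \ref{Lemma-8.1}, the desired isomorphism.

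The step I expect to be most delicate is the constancy of $h^0(Y_x,E|_{Y_x})$: the universal upper bound from Lemma \ref{injectivity} must hold on \emph{every} fiber, and this depends on each $Y_x$ being a connected reduced proper scheme together with $E|_{Y_x}$ being numerically flat there. These prerequisites are exactly what the hypotheses on $f$ (separability together with $f_*\cO_Y=\cO_X$) make available, and everything else is then formal from Grauert, Nakayama, and the descent of nefness.
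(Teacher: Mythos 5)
Your proposal is correct and follows essentially the same route as the paper: faithful flatness from Lemma \ref{Lemma-8.1}, then the closed-immersion criterion of Deligne--Milne verified by showing $h^0(Y_x,E|_{Y_x})\equiv r$ via Lemma \ref{injectivity} plus triviality on general fibers and semicontinuity, whence $f^*f_*E\cong E$ with $f_*E$ numerically flat by descent of nefness. You even spell out a couple of steps (the dense-open/semicontinuity bookkeeping and the fiberwise-to-global upgrade) that the paper leaves implicit.
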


\begin{proof}
By Lemma \ref{Lemma-8.1} we know that  $\pi_1^S(Y,y)\to \pi_1^S(X, f (y))$ is faithfully flat. So we need only to show that it is also a closed immersion.
	
Let $E$ be a numerically flat vector bundle of rank $r$ on $Y$. Our assumptions imply that the fiber $Y_x$ over any  $x\in X(k)$ is reduced (as $f$ is separable) and connected (since $f_*\cO_Y=\cO_X$). 
The restriction $E_x$ to any fiber $Y_x$  is numerically flat, so Lemma \ref{injectivity} implies that $h^0(Y_x, E_x)\le r$ for all $x\in X(k)$. So by semicontinuity of cohomology, $h^0(Y_x, E_x)=r$ for all $x\in X(k)$. Hence by Grauert's theorem $f_*E$ is locally free of rank $r$. 

Lemma \ref{injectivity} implies also that for all $x\in X(k)$ the evaluation maps
$\Gamma (Y_x, E_x)\otimes _k \cO_{Y_x}\to E_x$ are isomorphisms. 
Therefore the canonical map
$$f^*f_*E\to E$$ 
is also an isomorphism. Note that if $f^*F$ is nef for some vector bundle $F$ on $X$ then $F$ is also nef. So the above isomorphism implies that
$f_*E$ is numerically flat.
Now \cite[Proposition 2.21 (b)]{DM} implies that the
natural homomorphism $ \pi_1^S( Y, y) \to \pi_1^S(X,f(y))$ is a closed immersion and
therefore it is an isomorphism.
\end{proof}

\begin{remark}
Note that the above theorem is a weak version of the homotopy exact sequence for proper separable morphisms with geometrically connected fibers (see \cite[Expos\'e X, Corollaire 1.4]{SGA}).
It is known that in general exactness of this sequence fails even for Nori's fundamental group scheme and  a smooth projective morphism between smooth projecive varieties 
(see \cite[Section 2]{EPV}; note that this part of the paper is correct). So exactness of the homotopy exact sequence
also fails in general for the S-fundamental group scheme. 
\end{remark}

\medskip

We will also need  the following easy lemma.

\begin{lemma}\label{birational}
Let $f: Y\to X$ be a morphism of connected reduced proper $k$-schemes. If $f_*\cO_Y=\cO_X$ 
and $\pi_1^S(Y,y)=0$  for some $y\in Y(k)$ then $ \pi_1^S(X, x)=0$ for every $x\in X(k)$. 
\end{lemma}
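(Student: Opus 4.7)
The plan is to show directly that every numerically flat vector bundle $E$ on $X$ is isomorphic to $\cO_X^{\rk E}$. Under Tannaka duality this forces every representation of $\pi_1^S(X,x)$ to be trivial, hence $\pi_1^S(X,x)=0$ for every $x\in X(k)$ (in particular the choice of base point in the conclusion is immaterial).

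First I would pull $E$ back to $Y$. Since pullback preserves nefness, both $f^*E$ and $(f^*E)^*=f^*(E^*)$ are nef, so $f^*E$ is numerically flat on $Y$. Tannaka duality on $Y$ turns the hypothesis $\pi_1^S(Y,y)=0$ into the statement that every numerically flat bundle on $Y$ is trivial, so $f^*E\cong \cO_Y^r$ with $r=\rk E$.

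Next I would transport the global sections back to $X$ using the projection formula. For $E$ locally free one has $f_*f^*E\cong E\otimes_{\cO_X}f_*\cO_Y$, and the assumption $f_*\cO_Y=\cO_X$ gives $f_*f^*E\cong E$. Since $Y$ is connected, reduced, and proper over the algebraically closed field $k$, we have $\Gamma(Y,\cO_Y)=k$, so
$$\Gamma(X,E)=\Gamma(X,f_*f^*E)=\Gamma(Y,f^*E)=\Gamma(Y,\cO_Y^r)=k^r.$$
By Lemma \ref{injectivity}, the evaluation map $\Gamma(X,E)\otimes_k\cO_X\to E$ is injective and its image is a trivial \emph{subbundle} of $E$, in the strict sense that the quotient is locally free. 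Since this subbundle has rank $r=\rk E$, the quotient is a locally free sheaf of rank zero and hence zero, so the evaluation is an isomorphism $\cO_X^r\to E$.

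I do not anticipate a real obstacle here: the argument is a direct combination of the projection formula with Lemma \ref{injectivity}, both already available. The only slightly delicate ingredient is the standard Tannakian translation, on $Y$, of the hypothesis $\pi_1^S(Y,y)=0$ into triviality of every numerically flat bundle on $Y$.
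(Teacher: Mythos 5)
Your proof is correct and follows essentially the same route as the paper: pull back, trivialize on $Y$ using $\pi_1^S(Y,y)=0$, and push forward via the projection formula with $f_*\cO_Y=\cO_X$. The only difference is your final step, which detours through $\Gamma(X,E)$ and Lemma \ref{injectivity}; the paper concludes more directly by noting $E\simeq f_*f^*E\simeq f_*(\cO_Y^{\oplus r})\simeq \cO_X^{\oplus r}$, so Lemma \ref{injectivity} is not actually needed.
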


\begin{proof}
	If $F$ is a numerically flat bundle on $X$ then
	$f^*F$ is trivial, so $F\simeq f_*f^*F$ is also trivial. This implies that  $ \pi_1^S(X, x)=0$.
\end{proof}

The following corollary generalizes \cite[Proposition 8.2]{La1} (but with a completely different proof).

\begin{proposition}\label{homogeneous}
	Let $G$ be a connected reductive algebraic group over an algebraically closed field $k$
	and let $P\subset G$ be a parabolic subgroup.
	Then  for any Schubert variety $X$  in $G/P$ and any  $x\in X(k)$ we have
	 $\pi_1^S(X, x)=0$. In particular, we have  $\pi_1^S(G/P, x)=0$ for any $ x\in (G/P) (k)$.
\end{proposition}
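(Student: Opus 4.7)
The plan is to use a Bott--Samelson resolution and reduce the statement to an iterated $\mathbb{P}^1$-bundle computation, combined with Theorem \ref{homotopy-exact-sequence} and Lemma \ref{birational}.

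First I would settle the base case $\pi_1^S(\PP^1, x)=0$: by Grothendieck's theorem every vector bundle on $\PP^1$ splits as $\bigoplus \cO_{\PP^1}(a_i)$, and numerical flatness forces each $a_i=0$, so every numerically flat bundle on $\PP^1$ is trivial. Next, for any Zariski locally trivial $\PP^1$-bundle $f\colon Y\to X$ between connected reduced proper $k$-schemes I would apply Theorem \ref{homotopy-exact-sequence}: $f$ is smooth hence separable, $f_*\cO_Y=\cO_X$ by a standard cohomology and base change argument (the fibres are reduced with $h^0=1$), and every geometric fibre has trivial S-fundamental group by the base case. Theorem \ref{homotopy-exact-sequence} then gives $\pi_1^S(Y,y)\xrightarrow{\sim}\pi_1^S(X,f(y))$.

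Now I would bring in the geometry of $G/P$. For a fixed Schubert variety $X\subset G/P$, choose a reduced expression $s_{i_1}\cdots s_{i_n}$ for the corresponding Weyl group element. The associated Bott--Samelson variety
$$Z=P_{i_1}\times^B P_{i_2}\times^B \cdots \times^B P_{i_n}/B$$
is smooth and projective, and is built as a tower of $\PP^1$-bundles over $\Spec k$ (each quotient $P_{i_j}/B\simeq\PP^1$ gives one stage). Iterating the previous paragraph along this tower produces $\pi_1^S(Z,z)=0$ for any $z\in Z(k)$.

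Finally there is the natural Bott--Samelson morphism $\mu\colon Z\to X$, which is proper and birational onto its image. Since Schubert varieties in $G/P$ are normal in every characteristic (Seshadri, Ramanathan, Mehta--Srinivas), birationality of $\mu$ together with normality of $X$ forces $\mu_*\cO_Z=\cO_X$. Lemma \ref{birational} then yields $\pi_1^S(X,x)=0$ for every $x\in X(k)$. The ``in particular'' statement follows by taking $X=G/P$, which is itself the top Schubert variety in $G/P$. The main obstacle is the characteristic-free input $\mu_*\cO_Z=\cO_X$, i.e.\ the normality of Schubert varieties; everything else is a direct application of the tools already proved in Section~2.
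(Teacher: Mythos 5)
Your proof is correct and follows essentially the same route as the paper: an iterated application of Theorem \ref{homotopy-exact-sequence} along the $\PP^1$-bundle tower of a Bott--Samelson--Demazure--Hansen resolution, followed by Lemma \ref{birational} via normality of Schubert varieties. The only (harmless) difference is that you resolve the Schubert variety in $G/P$ directly by a Bott--Samelson variety for a reduced word of the minimal coset representative, whereas the paper first reduces to Schubert varieties in $G/B$ using the $P/B$-fibration $G/B\to G/P$ and Theorem \ref{homotopy-exact-sequence}; your explicit treatment of the base case $\pi_1^S(\PP^1,x)=0$ via Grothendieck's splitting theorem is a detail the paper leaves implicit.
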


\begin{proof}
By assumption $X$ is the closure of $BwP/P$, where $B\subset P$ is a Borel subgroup and $w$
is an element of the Weyl group $W$. The canonical map $\pi: G/B\to G/P$ is a locally trivial fibration with fiber $P/B$ and $X$ is the image of $Y=\pi^{-1}(X)$, which is a Schubert variety in $G/B$. Since Shubert varieties are normal (see \cite[Part II, 
Proposition 14.15]{Ja}), Theorem \ref{homotopy-exact-sequence} reduces the problem to Shubert varieties in the full flag variety $G/B$. So from now on we assume that $P=B$
is a Borel subgroup. Now let us recall that any Schubert variety $Y$ in $G/B$
admits a proper birational morphism $\tilde Y\to Y$ from the so called Bott--Samelson--Demazure--Hansen variety $\tilde Y$
(see \cite[Part II, Chapter 13]{Ja}). There also exists a sequence 
$\tilde Y=Y_m\to Y_{m-1}\to ...\to Y_1\to \Spec k$ of locally trivial fibrations with fibre $\PP^1$. So Theorem \ref{homotopy-exact-sequence} implies that $\pi_1^S(\tilde Y, \tilde y)=0$ for any $\tilde y\in \tilde Y (k)$. Then Lemma \ref{birational} implies that 
 $\pi_1^S(X, x)=0$  for any $x\in X(k)$. 
\end{proof}

\section{Subbundles via Grassmann schemes}

\begin{theorem}\label{main-limit}
	Let $X$ be a smooth projective variety of dimension $d\ge 1$ defined over an algebraically closed field $k$. Let $E$ be a rank $r$ nef vector bundle on $X$ with numerically trivial $c_1(E)$. Then for any ample line bundle $A$ on $X$ and any $1\le s<r$
	there exists a sequence of finite generically separable surjective morphisms $f_m:X_m\to X$ from smooth projective varieties and a sequence of rank $s$ subbundles $S_m\subset f_m^*E$ such that
	for some sequence $(a_m)$ of rational numbers converging to $0$, we have
		$$\frac{(f_m)_*c_1(S_m)}{\deg f_m} =a_m\cdot c_1(A)$$
	in $A^1(X)\otimes \QQ$.
\end{theorem}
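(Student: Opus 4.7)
The plan is to realize $f_m\colon X_m\to X$ as smooth complete intersections inside the Grassmann bundle of rank $s$ subbundles of $E$, and take $S_m$ to be the restriction of the universal subbundle. Let $\pi\colon G=\Gras_s(E)\to X$ be this Grassmann bundle, with tautological exact sequence $0\to \cS\to \pi^*E\to \cQ\to 0$, and set $\ell:= c_1(\det\cQ)\in A^1(G)$. Since $c_1(E)$ vanishes in $A^1(X)\otimes \QQ$ and $\pi^*$ sends numerically trivial classes to numerically trivial classes, $c_1(\cS)=-\ell$ in $A^1(G)\otimes \QQ$.

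For every $m\ge 1$, the line bundle $L_{m}:=\det\cQ^{\otimes m}\otimes \pi^*A$ is ample on $G$: via the relative Pl\"ucker embedding $G\hookrightarrow \PP(\bigwedge^s E)$ (noting that $\bigwedge^s E$ is nef since $E$ is), $L_m$ is the restriction of $\cO_{\PP(\bigwedge^s E)}(m)\otimes\pi^*(\det E^{\otimes m}\otimes A)$, and $\det E^{\otimes m}\otimes A$ is ample because it is numerically equivalent to $A$; then Lemma \ref{Viehweg} applies. Choose $e=e(m)$ making $L_{m}^{\otimes e}$ very ample, and let $X_m\subset G$ be a generic complete intersection of $N:=s(r-s)$ divisors in $|L_{m}^{\otimes e}|$. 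By Bertini (valid for very ample systems in any characteristic), $X_m$ is smooth of dimension $d=\dim X$; since $L_m$ restricts to an ample class on each $N$-dimensional fibre of $\pi$, the fibres of $f_m:=\pi|_{X_m}\colon X_m\to X$ are $0$-dimensional, so $f_m$ is finite and surjective, and a sufficiently general choice of sections yields generic separability (automatic in characteristic zero). Set $S_m:=\cS|_{X_m}$, a rank $s$ subbundle of $f_m^*E$.

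For the Chern class computation, write $a:=\pi^*c_1(A)$. By the projection formula in $A^1(X)\otimes\QQ$,
\begin{equation*}
(f_m)_*c_1(S_m)=\pi_*\bigl(c_1(\cS)\cdot c_1(L_{m}^{\otimes e})^N\bigr)=-e^N\pi_*\bigl(\ell\cdot(m\ell+a)^N\bigr).
\end{equation*}
Expanding by the binomial theorem, $\pi_*(\ell^{i+1}a^{N-i})=a^{N-i}\pi_*(\ell^{i+1})$ vanishes for $i<N-1$ by dimension, leaving only the contributions from $\pi_*(\ell^N)=d(s,r)\,[X]$ (the Pl\"ucker degree of the fibre Grassmannian) and $\pi_*(\ell^{N+1})\in A^1(X)$. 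The key input is that $\pi_*(\ell^{N+1})$ is a rational multiple of $c_1(E)$: by the splitting principle, this push-forward is a symmetric polynomial of weighted degree one in the Chern roots of $E$, hence of the form $c(s,r)\,c_1(E)$ for a rational constant $c(s,r)$ depending only on $s$ and $r$. Since $c_1(E)=0$ in $A^1(X)\otimes\QQ$, this contribution vanishes. Analogously $\deg f_m = e^N\pi_*((m\ell+a)^N)=e^N m^N d(s,r)$, and we conclude
\begin{equation*}
\frac{(f_m)_*c_1(S_m)}{\deg f_m}=-\frac{N}{m}\,c_1(A).
\end{equation*}
Setting $a_m:=-N/m\to 0$ completes the proof.

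The principal obstacle is justifying the identity $\pi_*(\ell^{N+1})\in \QQ\cdot c_1(E)$. The splitting-principle argument sketched above is perhaps the cleanest, but one can alternatively factor $\pi$ through $\Flag(E)$ and invoke Theorem \ref{DP-formula}. A minor additional point is ensuring finiteness, surjectivity and generic separability of $f_m$ in positive characteristic, but these are standard for sufficiently ample complete intersections.
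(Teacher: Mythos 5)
Your proposal is correct and follows essentially the same route as the paper: a general complete intersection of $s(r-s)$ divisors in $|(\det\cQ^{\otimes m}\otimes\pi^*A)^{\otimes e}|$ inside the Grassmann bundle, ampleness via the Pl\"ucker embedding and Lemma \ref{Viehweg}, and the same binomial expansion in which the only potentially troublesome term $\pi_*(\ell^{N+1})$ is a universal multiple of $c_1(E)$ and hence vanishes (the paper justifies this last point by citing the J\'ozefiak--Lascoux--Pragacz formula, whereas you use the equivalent soft degree/splitting-principle argument). The only cosmetic difference is that you twist by $\det\cQ$ rather than the Pl\"ucker $\cO(1)=(\det\cS)^{*}$, which differ by the numerically trivial class $\pi^*c_1(E)$, so the computation is unaffected.
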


\begin{proof}
	Let $\pi :\Gr (s, E)=\Gr (E, r-s)\to X$ denote the Grassmann  scheme of rank $s$ subbundles of $E$
	(or, equivalently,  rank $r-s$ quotients of $E$).
	This is an $X$-scheme representing the functor $\Sch /k\to \Sets$ sending a scheme $T$ to the set of all constant rank $s$ subsheaves $G\subset E_T$ with locally free quotients  $E_T/G$.
	This scheme comes with the canonical Pl\"ucker embedding $\Gras (s, E)\to \Gras (1, \bigwedge ^s E)=\PP(\bigwedge ^s E^*)$ (of schemes over $X$),
	given by sending $G\subset E_T$ to the quotient $\bigwedge ^s E^*_T\to (\det G)^*$.
	
By Proposition \ref{num-flat}  $E$ is numerically flat and hence $\bigwedge ^s E^*$ is also numerically flat. Therefore
	$\cO_{\PP(\bigwedge ^s E^*)}(1)$ is nef. 
	
	Let $A$ be an ample line bundle on $X$ and let $m$ be a positive integer. By Lemma \ref{Viehweg} the line bundle $\cO_{\Gras (s,E)}(m)\otimes \pi^*A$ is the restriction of an ample line bundle on $\PP(\bigwedge ^s E^*)$, so it is also ample. Therefore for all large $n$ the line bundle $L_{m,n}:=\cO_{\Gras (s,E)}(mn)\otimes \pi^*A^{\otimes n}$ is very ample. 
	
For every positive integer $m$  we choose some $n_m$ such that $L_{m,n_m}$ is very ample and we denote by 
 $X_m\subset \Gr (s, E)$ a general complete intersection of $s(r-s)$ elements of the linear system $|L_{m,n_m}|$. Let $f_m: X_m \to X$ be the restriction of $\pi$ to $X_m$.
	Restricting the universal exact sequence 
	$$0\to S\to \pi^*E\to Q\to 0$$
	to $Y$, we get a short exact sequence
	$$0\to S_{X_m}\to f_m^*E\to Q_{X_m}\to 0.$$
	
	Let us set $\xi=  c_1 (\cO_{\Gras (s,E)}(1))$. Then we have
	$$\deg f_m\cdot [X]= (f_m)_*[X_m]= \pi_*c_1(L_{m,n})^{s(r-s)}= (mn)^{s(r-s)}\pi_*\xi ^{s(r-s)}.$$
Since $c_1(Q)=\xi$, we also have
	\begin{align*}
	(f_{m})_*c_1(Q_{X_m}) &=\pi_* (c_1 (Q)\cdot [X_m])=	\pi_* (L_{m,n}^{s(r-s)}\cdot \xi )
	=n^{s(r-s)}\pi_* \left( (m\, \xi +\pi^*c_1(A))^{s(r-s)}\cdot \xi  \right) \\
		&= n^{s(r-s)}\left(
		m^{s(r-s)} \pi_*\xi^{s(r-s)+1}+
		m^{s(r-s)-1}s(r-s)\,  \pi_*( \xi ^{s(r-s)}\pi^*c_1(A))\right),\\
	\end{align*}
where $\pi_* \xi ^i=0$ for $i<{s(r-s)}$ for dimensional reasons.

Now the J\'ozefiak--Lascoux--Pragacz formula (see \cite[Proposition 1]{JLP}) allows us to compute $ \pi_* \xi^{s(r-s)+1} $ as a certain multiple of $c_1(E)$ (see \cite[Theorem 0.1]{KT} for an explicit formula).  Hence our assumption on $c_1(E)$ implies that
	\begin{align*}
		(f_{m})_*c_1(Q_{X_m}) &=n^{s(r-s)}
		m^{s(r-s)-1}s(r-s)\, \pi_* \left(  \xi ^{s(r-s)}\pi^*c_1(A) \right)
		\\
		&=n^{s(r-s)} m^{s(r-s)-1}s(r-s)\, \left(\pi_* \xi ^{s(r-s)}\right)
		c_1(A)\\
	\end{align*}
in $A^1(X)\otimes \QQ$.	It follows that
	$$\frac{(f_m)_*c_1(Q_{X_m}) }{\deg f_m}= \frac{s(r-s)\,  c_1(A)}{m}.$$
	Since $(f_m)_*c_1(S_{X_m})=-(f_m)_*c_1(Q_{X_m})$, we get the required assertion.
\end{proof}

\begin{corollary}\label{cor-subbundle}
	Let $H$ be ample divisor on $X$ and let $E$ be a strongly $H$-semistable vector bundle of rank $r$. If $\int_X\Delta (E)H^{d-2}=0$ then for any ample line bundle $A$ on $X$ and any	
	$1\le s<r$ there exists a sequence of finite surjective morphisms $f_m:X_m\to X$ from smooth projective varieties and a sequence of rank $s$ locally free subsheaves $S_m\subset f_m^*E$ such that
	$$\frac{(f_m)_*c_1(S_m)}{\deg f_m} =\frac{s}{r}c_1(E)+a_m c_1(A)$$
	in $A^1(X)\otimes \QQ$, where $a_m$ is some sequence of rational numbers converging to $0$.	
	In particular, we have
	$$\lim _{m\to \infty}\frac{\mu_{f_m^*H}(S_m)}{\deg f_m}=\mu_H(E).$$
\end{corollary}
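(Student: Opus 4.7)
My plan is to reduce Corollary \ref{cor-subbundle} to Theorem \ref{main-limit} by passing to a suitable finite cover $\sigma:Y\to X$ over which $\det E$ acquires an $r$-th root $M$; twisting $\sigma^*E$ by $M^{-1}$ will produce a bundle $E'$ with $c_1(E')=0$ that, thanks to the vanishing of $\Delta(E)\cdot H^{d-2}$, turns out to be numerically flat by Theorem \ref{num-flat}. Theorem \ref{main-limit} will then be applied on $Y$, and the resulting subbundles will be twisted back by $M$ to sit inside $f_m^*E$ on the cover.

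To carry this out I first invoke the Bloch--Gieseker covering trick to obtain a finite surjective morphism $\sigma:Y\to X$ from a smooth projective variety $Y$ together with a line bundle $M$ on $Y$ satisfying $M^{\otimes r}\simeq \sigma^*\det E$. Setting $E':=\sigma^*E\otimes M^{-1}$, one has $c_1(E')=\sigma^*c_1(E)-r\,c_1(M)=0$ in $A^1(Y)$. Since twisting by a line bundle leaves the discriminant invariant and $\sigma$ is flat, $\Delta(E')=\sigma^*\Delta(E)$, so
\[
\int_Y \Delta(E')\cdot (\sigma^*H)^{d-2}=\deg\sigma \cdot \int_X \Delta(E)\cdot H^{d-2}=0.
\]
Strong $H$-semistability being preserved under finite pull-back and under line-bundle twist, $E'$ is strongly $\sigma^*H$-semistable. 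Theorem \ref{num-flat}, applied on $Y$ with the ample class $\sigma^*H$, then gives that $E'$ is numerically flat.

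Next I apply Theorem \ref{main-limit} to $E'$ on $Y$ with the ample line bundle $\sigma^*A$ (ample because $\sigma$ is finite), producing finite surjective morphisms $g_m:Y_m\to Y$ from smooth projective varieties and rank $s$ subbundles $T_m\subset g_m^*E'$ with $(g_m)_*c_1(T_m)/\deg g_m=a'_m\,\sigma^*c_1(A)$ in $A^1(Y)\otimes\QQ$ for some sequence $a'_m\to 0$. I then set $X_m:=Y_m$, $f_m:=\sigma\circ g_m:X_m\to X$, and $S_m:=T_m\otimes g_m^*M$, which is a rank $s$ subbundle of $g_m^*(E'\otimes M)=f_m^*E$. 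A short projection-formula calculation, using $\sigma_*\sigma^*=\deg\sigma$ together with $\sigma_*c_1(M)=\frac{1}{r}\sigma_*\sigma^*c_1(E)=\frac{\deg\sigma}{r}c_1(E)$ in $A^1(X)\otimes\QQ$, will give
\[
\frac{(f_m)_*c_1(S_m)}{\deg f_m}=\frac{s}{r}c_1(E)+a'_m\,c_1(A),
\]
so that $a_m:=a'_m$ does the job. The ``in particular'' slope statement then drops out by intersecting with $H^{d-1}$ (one more projection formula) and letting $m\to\infty$.

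The only truly non-formal step is the Bloch--Gieseker construction of $(\sigma,Y,M)$, which is where I expect the only real difficulty. It is standard via iterated $r$-th-power covers along an embedding of $X$ by a sufficiently ample twist of $\det E$, with Bertini giving smoothness, but some care is required in positive characteristic when $\chr(k)\mid r$, since the $r$-th-power cover becomes inseparable. Everything after that is essentially bookkeeping with push-forwards, combined with the well-known invariance of strong semistability and of $\Delta$ under finite pull-back and line-bundle twist.
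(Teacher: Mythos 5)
Your proposal is correct and follows essentially the same route as the paper: a Bloch--Gieseker cover to extract an $r$-th root of $\det E$, twisting by $M^{-1}$ to get a numerically flat bundle via Theorem \ref{num-flat}, applying Theorem \ref{main-limit} with the pulled-back polarization $\sigma^*A$, twisting back by $M$, and concluding by the projection formula. The only difference is cosmetic (the paper states the push-forward identity for an arbitrary generically finite $f$ over the cover before invoking Theorem \ref{main-limit}), and your worry about inseparability of the cover is harmless since the corollary does not claim generic separability of the $f_m$.
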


\begin{proof}
	We can use the Bloch--Gieseker branched covering trick to reduce to the previous theorem.
	Let us set $L=\det E$. By \cite[Lemma 2.1]{BG} there exists a smooth projective variety $Y$ and
	a finite surjective morphism $\tilde f: \tilde X\to X$ such that $\tilde f^*L\simeq M^{\otimes r}$ for some line bundle $M$ on $Y$. Then $\tilde E=\tilde f^*E \otimes M^{-1}$ is strongly $\tilde f^*H$-semistable with  
	$$\int_{\tilde X}\Delta (\tilde E)(f^*H)^{d-2}=\int_{\tilde X}\Delta (f^*E)(f^*H)^{d-2}=\deg f\cdot \int_X\Delta (E)H^{d-2}=0$$
	Since $\det \tilde E=\cO_{\tilde X}$, Theorem \ref{num-flat} implies that $\tilde E$ is numerically flat. 
	
	Now for any generically finite morphism $f: Y\to \tilde X$ and any subsheaf $S\subset f^*\tilde E$  
	with $f_*c_1(S)=a\deg f \cdot c_1(\tilde f^*A)$
	we have
	$$ \frac{(\tilde f\circ f)_*c_1(S\otimes f^* M)}{\deg (\tilde f\circ f)}=
\frac{\tilde f_*\left(	f_*c_1(S)+s\deg f\cdot c_1(M)\right)}{\deg (\tilde f\circ f)}=
	\frac{\tilde f_*\left( ac_1(\tilde f ^*A) +\frac{s}{r} c_1(\tilde f^*E)\right)}{\deg \tilde f}=ac_1(A)+\frac{s}{r} c_1(E),
	$$

Since
	$S\otimes f^*M\subset (\tilde f\circ f)^*E$, the first part of the corollary follows from Theorem \ref{main-limit}.
	The second part follows from the first one by intersecting the obtained equality with $H^{d-1}$.
\end{proof}

\section{Flags of line bundles via flag schemes}

\begin{theorem}\label{main-flag}
	Let $X$ be a smooth projective variety of dimension $d\ge 1$ defined over an algebraically closed field $k$. Let $E$ be a rank $r$ nef vector bundle on $X$ with numerically trivial $c_1(E)$. Then for any ample line bundle $A$ on $X$
	there exists a sequence $\{f_m\}_{m\in \NN}$ of finite generically separable surjective morphisms $f_m:X_m\to X$ from smooth projective varieties,  a sequence $\{S_{\bullet , m}\}_{m\in \NN}$ 
	of filtrations of $f_m^*E$ by subbundles $$S_{0,m}=0\subset S_{1,m}\subset...\subset S_{r-1,m}\subset S_{r,m}=f_m^*E$$  
of increasing rank, and some rational numbers $\gamma_1,...,\gamma_r$, such  that for $i=1,...,r$ we have 
	$$\frac{(f_m)_*c_1(S_{i,m}/S_{i-1,m})}{\deg f_m} =\frac{\gamma_i}{m}\cdot c_1(A)$$
	in $A^1(X)\otimes \QQ$. Moreover, we can choose $f_m:X_m\to X$ so that for any  $x_m\in X_m (k)$ the induced map of $S$-fundamental group schemes $\pi_1^S(X_m, x_m)\to \pi_1^S(X, f_m(x_m))$
	is faithfully flat. If $\dim X\ge 2$ then we can find $f_m$ for which $\pi_1^S(X_m, x_m)\to \pi_1^S(X, f_m(x_m))$ is an isomorphism.
\end{theorem}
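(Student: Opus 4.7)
The plan is to run the argument of Theorem \ref{main-limit} with the full flag bundle $\pi\colon Y:=\Flag(E)\to X$ in place of the Grassmann bundle, and to replace the J\'ozefiak--Lascoux--Pragacz formula by the Darondeau--Pragacz push-forward (Theorem \ref{DP-formula}). The universal flag on $Y$ will restrict, after cutting $Y$ by $\binom{r}{2}$ general very ample divisors, to the required filtration of $f_m^*E$ with line-bundle quotients, and the additional $\pi_1^S$-claim is reduced via the results of Section~2 to a Lefschetz-type comparison between $\pi_1^S(X_m)$ and $\pi_1^S(Y)$.

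First I would record that $\pi_1^S(Y,y)\cong \pi_1^S(X,\pi(y))$ canonically. Indeed, $\pi$ is smooth and proper with $\pi_*\cO_Y=\cO_X$ and geometrically connected fibres $Y_x\cong GL_r/B$, whose $S$-fundamental group is trivial by Proposition \ref{homogeneous}, so Theorem \ref{homotopy-exact-sequence} applies. Next, since $E$ is numerically flat (Theorem \ref{num-flat}), every $\bigwedge^j E^*$ is nef, hence each tautological line bundle $\det S_j^*$ on $Y$ (pulled back from the Pl\"ucker factor $Y\to\PP(\bigwedge^j E^*)$) is nef. Iterating Lemma \ref{Viehweg} along the tower of projective bundles realizing $Y\to X$, one finds a nef class $\Xi$ on $Y$, supported on the $\xi_i$'s, such that $\Xi+\pi^*c_1(A)$ is ample, and hence $m\Xi+\pi^*c_1(A)$ is ample for every $m\ge 1$; then the line bundle $L_{m,n}$ with $c_1(L_{m,n})=n(m\Xi+\pi^*c_1(A))$ is very ample for $n$ sufficiently large. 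Let $X_m$ be a general complete intersection of $\binom{r}{2}$ members of $|L_{m,n(m)}|$; by Bertini it is smooth, $f_m=\pi|_{X_m}\colon X_m\to X$ is finite surjective, and generic separability follows from a standard Bertini-type general-position argument. Restricting the universal flag yields the required filtration $0\subset S_{1,m}\subset\cdots\subset S_{r-1,m}\subset f_m^*E$ with line-bundle quotients $T_{i,m}=S_{i,m}/S_{i-1,m}$.

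For the numerical computation, note that $c_1(T_i)=-\xi_{r-i}$ for $i<r$, while $c_1(T_r)=\pi^*c_1(E)+\sum_j\xi_j$; since $c_1(E)\equiv 0$, the latter is $\equiv \sum_j\xi_j$ in $A^1(X)\otimes\QQ$. Expanding $c_1(L_{m,n})^{\binom{r}{2}}=n^{\binom{r}{2}}(m\Xi+\pi^*c_1(A))^{\binom{r}{2}}$ in powers of $\pi^*c_1(A)$, the $k=0$ pure-$\xi$ term has $\xi$-degree $\binom{r}{2}+1$ after multiplication by $c_1(T_i)$, so by Theorem \ref{DP-formula} its pushforward is a multiple of $c_1(E)$ and vanishes in $A^1(X)\otimes\QQ$. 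The $k=1$ term gives a $\xi$-polynomial of degree $\binom{r}{2}$, whose pushforward is a rational multiple of $[X]$, contributing $n^{\binom{r}{2}}m^{\binom{r}{2}-1}\cdot(\mathrm{const})\cdot c_1(A)$; for $k\ge 2$ the pushforward vanishes for dimensional reasons. A parallel computation gives $\deg f_m=\pi_*c_1(L_{m,n})^{\binom{r}{2}}=n^{\binom{r}{2}}m^{\binom{r}{2}}D$ for a positive integer $D$, so the $n$-dependence cancels in the quotient, yielding $a_m=O(1/m)$ with the required equality.

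For the fundamental-group statement, I would invoke the Lefschetz-type restriction theorem for $\pi_1^S$ from \cite{La2}: for a smooth, sufficiently positive ample divisor $D$ in a smooth projective variety $V$ of dimension $\ge 2$, the map $\pi_1^S(D)\to\pi_1^S(V)$ is faithfully flat, and it is an isomorphism when $\dim V\ge 3$. Applying this inductively along the chain $X_m=D_1\cap\cdots\cap D_{\binom{r}{2}}\subset D_1\cap\cdots\cap D_{\binom{r}{2}-1}\subset\cdots\subset D_1\subset Y$ (all intermediate intersections smooth by Bertini once $n(m)$ is large enough), every intermediate variety has dimension at least $\dim X$. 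Hence when $\dim X\ge 2$ each step is an isomorphism and one obtains $\pi_1^S(X_m)\cong\pi_1^S(Y)\cong\pi_1^S(X)$; when $\dim X=1$ only the final step drops from dimension $2$ to $1$ and gives merely faithful flatness, which yields the weaker conclusion after composing with the isomorphism $\pi_1^S(Y)\cong\pi_1^S(X)$. The main obstacle is this last part: one must choose $n(m)$ large enough that the Lefschetz-type theorem applies at every stage while preserving the estimate $a_m=O(1/m)$, which is possible because the factor $n(m)^{\binom{r}{2}}$ cancels in the defining ratio.
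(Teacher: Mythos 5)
Your proposal follows the same route as the paper's proof: pass to the full flag bundle $\pi\colon\Flag(E)\to X$, identify $\pi_1^S(\Flag(E))$ with $\pi_1^S(X)$ via Proposition \ref{homogeneous} and Theorem \ref{homotopy-exact-sequence}, cut by $\binom{r}{2}$ general members of a very ample system of the form $n(m\,\Xi+\pi^*c_1(A))$ with $\Xi$ nef and relatively ample, compute the push-forwards with the Darondeau--Pragacz formula, and get the $\pi_1^S$-statement from the Lefschetz-type theorems (these are \cite[Theorems 10.2 and 10.4]{La1}, not \cite{La2}; the paper also gets finiteness and generic separability of $f_m$ from \cite[Theorem 2.6, Proposition 3.3]{Hi} rather than ``standard Bertini''). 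All of that is fine.

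The one genuine gap is the uniformity of $a_m$ in $i$. You write that the $k=1$ term contributes $n^{\binom{r}{2}}m^{\binom{r}{2}-1}\cdot(\mathrm{const})\cdot c_1(A)$, but this constant is $\pi_*\bigl(\Xi^{\binom{r}{2}-1}\cdot c_1(T_i)\bigr)$ (up to a binomial factor), and for a general relatively ample $\Xi=\sum c_j\xi_j$ it genuinely depends on $i$: already for $r=3$ one computes $\pi_*\xi_1^2\xi_2=-\pi_*\xi_1\xi_2^2$ and $\pi_*\xi_i^3=0$, so the three quotients acquire three different constants. Your argument therefore only produces, for each $i$, a sequence $a_{i,m}=O(1/m)$, whereas the statement asserts a single sequence $a_m$ working for all $i$ simultaneously. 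Note this is not a cosmetic point: summing the asserted identity over $i=1,\dots,r$ and using $(f_m)_*f_m^*c_1(E)=\deg f_m\cdot c_1(E)\equiv 0$ shows that a single $a_m$ forces the common constant to vanish, i.e.\ one must prove that $\pi_*\bigl(\Xi^{\binom{r}{2}-1}\cdot\xi_i\cdot\prod_{j<l}(\xi_j-\xi_l)\text{-coefficient}\bigr)$ is actually independent of $i$ for the chosen $\Xi$. This is exactly where the paper does extra work that your proposal omits: it makes a specific choice of the multidegree $\underline{m}$ and computes the Darondeau--Pragacz coefficients $\alpha$ and $\beta_s$ explicitly, arguing that $\beta_s$ does not depend on $s$ so that the differences $\tau_{r-i}-\tau_{r-i+1}$ all push forward to the same multiple of $c_1(A)$. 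You need to either supply this coefficient analysis for your $\Xi$ (and check that the corresponding $\alpha=\pi_*\Xi^{\binom{r}{2}}$ is nonzero, which is automatic only when $\Xi$ is relatively ample), or else weaken the conclusion to $i$-dependent sequences $a_{i,m}\to 0$.
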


\begin{proof}
Let $V$ be an $r$-dimensional vector space over $k$ and let us fix a full flag
$$ V=V_r\twoheadrightarrow V_{r-1}\twoheadrightarrow ... \twoheadrightarrow V_2\twoheadrightarrow V_{1} ,$$
where  $\dim _{k}V_j=j$. We have  a standard  Borel subgroup  $B\subset \GL (V)$ corresponding to linear
maps preserving this flag.  Note that  we have the canonical closed embedding
$$\GL (V)/B\hookrightarrow  \Gras (V,1 )\times _k\Gras (V,2)\times_k ... \times_k \Gras (V, r-1) $$
obtained on the level of functors of points $(\Sch/k)^{\rm op}\to \Sets$
by sending an $S$-point of $\GL (V)/B$ corresponding to a full flag
$$ V_{S}\twoheadrightarrow E_{r-1}\twoheadrightarrow ... \twoheadrightarrow E_2\twoheadrightarrow E_{1} $$
to a tuple  $(V_S\twoheadrightarrow E_{j})_{j=1,...,r-1}$.

Let  $P\to X$ be the frame bundle of $E$. This is a principal $\GL (V)$-bundle such that $E$ is isomorphic to the fibration $P\times _{\GL (V)}V\to X$ associated to $P$ through the action of $\GL (V)$ on $V$. 

Let $\pi : \Flag (E)\to X$ denote the full flag bundle of $E$. It can be constructed as a fibration $P\times _{\GL (V)}\GL (V)/B \to X$ associated to  $P$ through the action of $\GL (V)$ on $\GL (V)/B$.
The above embedding of $\GL (V)/B$ into a product of Grassmannians induces a closed embedding of $X$-schemes
$$\Flag (E)\hookrightarrow  \Gras (E,1 )\times _X\Gras (E,2)\times_X ... \times_X \Gras (E, r-1) .$$
We have a universal quotient flag $ \pi^*E \twoheadrightarrow Q_{r-1}\twoheadrightarrow ... \twoheadrightarrow Q_{2}\twoheadrightarrow Q_{1} $
and the corresponding universal flag of subbbundles $S_1\subset S_2\subset...\subset S_{r-1}\subset \pi^*E$ that are related by  $Q_i=\pi^*E/S_{r-i}$. 
Let $p_s: \Flag (E)\to  \Gras (E,s)$ be the canonical map obtained by composing the above embedding with the projection. Let us set $\tau_s= c_1 (p_s^*\cO_{\Gras (E,s)}(1) )$.
Then in the notation of Subsection \ref{Subsection-DP} we have 
$c_1(Q_s)=\tau_s=\sum _{i=1}^{r-s} \xi_{r-i}$ in $A^1(X)\otimes \QQ$ for $s=1,...,r-1$.
Hence  $\xi _s=\tau_{s}-\tau _{s+1}$ for $s=1,..., r-1$ (here we put $\tau_r=0$).

	Let $A$ be an ample line bundle on $X$ and let $\underline {m}=(m_1,...,m_{r-1})$ be an $(r-1)$-tuple of positive integers. By Lemma \ref{Viehweg}  for every $1\le i\le r-1$ the line bundle $\cO_{\Gras (i,E)}(m_i)\otimes \pi^*A$ is the restriction of an ample line bundle on $\PP(\bigwedge ^i E^*)$, so it is also ample.
	It follows that the line bundle
	$$\boxtimes _{i=1}^{r-1}\cO_{\Gras (i,E)}(m_i)\otimes \pi^*A$$ and  its restriction $L_{\underline{m}}$ to $\Flag (E)$
	are also ample.
So  for all large $n$ the line bundle $L_{\underline {m},n}:=L_{\underline{m}}^{\otimes n}$ is very ample.

Proposition \ref{homogeneous} implies that the fibers of $\pi: \Flag (E)\to X$ have trivial $S$-fundamental group schemes, so by Theorem \ref{homotopy-exact-sequence} 
for any $k$-point $x\in (\Flag (E))(k)$ the map	
$\pi_1^S(\Flag (E), x)\to \pi_1^S(X, \pi (x))$ is an isomorphism.

Let $X_{\underline{m}}\subset \Flag (E)$ be a general complete intersection of $\binom{r}{2}$ elements of the linear system $|L_{
\underline{m},n}|$. Let $f_{\underline{m}}: X_{\underline{m}}\to X$ be the restriction of $\pi$ to $X_{\underline{m}}$.
By Bertini's theorem $X_{\underline{m}}$ is a smooth projective variety. By \cite[Theorem 2.6]{Hi}  $f_{\underline{m}}$ is a finite morphism. It is generically separable by \cite[Proposition 3.3]{Hi}.
\cite[Theorem 10.2]{La1} implies that
for any $x\in X_{\underline{m}}(k)$
 the map  $\pi_1 ^S (X_{\underline{m}}, x)\to \pi_1^S(\Flag (E), x)$
is faithfully flat, so $\pi_1^S(X_{\underline{m}}, x)\to \pi_1^S(X, f_{\underline{m}}(x))$
is also faithfully flat.  If moreover $\dim X \ge 2$ then \cite[Theorem 10.4]{La1} implies that the map 
$\pi_1 ^S (X_{\underline{m}}, x)\to \pi_1^S(\Flag (E), x)$
is an isomorphism so $\pi_1^S(X_{\underline{m}}, x)\to \pi_1^S(X, f_{\underline{m}}(x))$
is also an isomorphism.

Restricting the universal flag to  $X_{\underline{m}}$, we obtain a flag of bundles on $X_{\underline{m}}$.	
Then
	$$\deg f_{\underline{m}}\cdot [X]= (f_{\underline{m}})_*[X_{\underline{m}}]= \pi_*c_1(L_{\underline {m},n})^{\binom{r}{2}}= n^{\binom{r}{2}}\pi_*
	\left(\sum m_i\tau_i +(r-1)\pi^*c_1(A)\right)^{\binom{r}{2}}=n^{\binom{r}{2}}\pi_*
	\left(\sum m_i\tau_i \right)^{\binom{r}{2}},$$
	where the last equality holds by the projection formula and  dimensional reasons.
We also have
	\begin{align*}
	(f_{\underline{m}})_* (c_1 (Q_s|_{X_{\underline{m}}}))&= \pi_* (c_1 (Q_s)\cdot [X_{\underline{m}}]) =	\pi_* (L_{{\underline{m}},n}^{\binom{r}{2}}\cdot \tau_s)
	=n^{\binom{r}{2}}\pi_* \left( \left(\sum m_i\tau_i +(r-1)\pi^*c_1(A)\right)^{\binom{r}{2}}\cdot \tau_s \right) \\
		&= n^{\binom{r}{2}}\left(
	 \pi_*\left(\left(\sum m_i\tau_i \right)^{\binom{r}{2}}\cdot \tau _s\right)+ (r-1)\binom{r}{2}\pi_*\left(\left(\sum m_i\tau_i \right)^{\binom{r}{2}-1}\cdot \tau _s\right)\cdot
		c_1(A) \right),\\
	\end{align*}
since the remaining elements vanish for dimensional reasons.

Now Theorem \ref{DP-formula} allows us to compute $ \pi_*\left(\left(\sum m_i\tau_i \right)^{\binom{r}{2}}\cdot \tau _s\right)$ as a certain multiple of $c_1(E)$.  Hence our assumption on $c_1(E)$ implies that
$$
	(f_{\underline{m}})_* (c_1 (Q_s|_{X_{\underline{m}}}))= n^{\binom{r}{2}} (r-1)\binom{r}{2}\pi_*\left(\left(\sum m_i\tau_i \right)^{\binom{r}{2}-1}\cdot \tau _s\right)\cdot	 c_1(A) .	
$$
in $A^1(X)\otimes \QQ$.	Note that 
$$\sum_{i=1}^{r-1} m_i\tau_i=\sum_{i=1}^{r-1} \left(\sum _{j=1}^{i}m_j\right) \xi _i.$$

Now for any positive integer $m$ we consider the $(r-1)$-tuple $\underline{m}=(m,...,m)$.
For this tuple we write $X_m$ and $f_m$  instead of $X_{\underline{m}}$ and $f_{\underline{m}}$.
Let us set
$$
\alpha:=\left[\prod _{i=1}^{r-1}t_i^{r-1}\right]\left(  \left(\sum _{i=1}^{r-1}it_i\right)^{\binom{r}{2}}\prod _{1\le i<j\le r-1} (t_i-t_j)\right)$$
and
$$
\beta_s:=\left[\prod _{i=1}^{r-1}t_i^{r-1}\right]\left(  \left(\sum _{i=1}^{r-1}it_i\right)^{\binom{r}{2}-1}\left(\sum _{i=s}^{r-1}t_i\right)\prod _{1\le i<j\le r-1} (t_i-t_j)\right) .$$
Then Theorem \ref{DP-formula} implies that
$$\deg f_m=(nm)^{\binom{r}{2}}   \alpha $$
(so in particular $\alpha>0$),
and
$$
	(f_m)_* (c_1 (Q_s|_{X_m}))=  n^{\binom{r}{2}}  m^{\binom{r}{2}-1}(r-1)\binom{r}{2}  \beta_s  c_1(A) .	
$$
But then we get
$$\frac{(f_m)_*c_1(Q_{X_m, s}) }{\deg f_m}= \frac{(r-1)\binom{r}{2} \beta _s\,  c_1(A)}{m\alpha },$$
which easily implies the required assertion.
\end{proof}

In the same way as in proof of Corollary \ref{cor-subbundle}, the above theorem implies the following corollary:

\begin{corollary}\label{cor-flag}
Let $H$ be ample divisor on $X$ and let $E$ be a  strongly $H$-semistable vector bundle of rank $r$. If $\int_X\Delta (E)H^{d-2}=0$ then for any ample line bundle $A$ on $X$ there exists a sequence of finite surjective morphisms $f_m:X_m\to X$ from smooth projective varieties and 
a sequence of filtrations of $f_m^*E$ by subbundles $$S_{0,m}=0\subset S_{1,m}\subset...\subset S_{r-1,m}\subset S_{r,m}=f_m^*E$$  
so that for $i=1,...,r$ the quotients $S_{i,m}/S_{i-1,m}$ are line bundles and  we have 
$$\frac{(f_m)_*c_1(S_{i,m}/S_{i-1,m})}{\deg f_m} =\frac{c_1(E)}{r}+\frac{\gamma_i}{m} \cdot c_1(A)$$
in $A^1(X)\otimes \QQ$ for some rational numbers $\gamma_1,...,\gamma_r$. 
\end{corollary}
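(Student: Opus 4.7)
The strategy is to reduce to Theorem \ref{main-flag} exactly as Corollary \ref{cor-subbundle} reduces to Theorem \ref{main-limit}, namely via the Bloch--Gieseker branched covering trick. Set $L=\det E$. By \cite[Lemma 2.1]{BG}, there exists a smooth projective variety $\tilde X$ together with a finite surjective morphism $\tilde f:\tilde X\to X$ and a line bundle $M$ on $\tilde X$ with $\tilde f^*L\simeq M^{\otimes r}$. Then $\tilde E:=\tilde f^*E\otimes M^{-1}$ is strongly $\tilde f^*H$-semistable with trivial determinant, and its discriminant still satisfies $\int_{\tilde X}\Delta(\tilde E)(\tilde f^*H)^{d-2}=\deg\tilde f\cdot\int_X\Delta(E)H^{d-2}=0$. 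Theorem \ref{num-flat} therefore shows that $\tilde E$ is numerically flat, which allows Theorem \ref{main-flag} to be applied.

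Applying Theorem \ref{main-flag} to $\tilde E$ on $\tilde X$ with the ample line bundle $\tilde f^*A$ produces a sequence of finite surjective morphisms $g_m:Y_m\to\tilde X$ from smooth projective varieties and filtrations
$$0=\tilde S_{0,m}\subset\tilde S_{1,m}\subset\dots\subset\tilde S_{r,m}=g_m^*\tilde E$$
with line bundle quotients $\tilde S_{i,m}/\tilde S_{i-1,m}$ satisfying
$$\frac{(g_m)_*c_1(\tilde S_{i,m}/\tilde S_{i-1,m})}{\deg g_m}=a_m\cdot c_1(\tilde f^*A)$$
for some rational sequence $a_m\to 0$. Now set $f_m:=\tilde f\circ g_m:Y_m\to X$ and define $S_{i,m}:=\tilde S_{i,m}\otimes g_m^*M$. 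Since tensoring by a line bundle preserves the property of being a subbundle and preserves the quotient up to twisting by the same line bundle, we obtain a filtration of $S_{r,m}=g_m^*\tilde E\otimes g_m^*M=g_m^*\tilde f^*E=f_m^*E$ with line bundle quotients $S_{i,m}/S_{i-1,m}\simeq(\tilde S_{i,m}/\tilde S_{i-1,m})\otimes g_m^*M$.

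Pushing forward and using $(\tilde f\circ g_m)_*=\tilde f_*\circ(g_m)_*$ and $\deg(\tilde f\circ g_m)=\deg\tilde f\cdot\deg g_m$, one computes
$$\frac{(f_m)_*c_1(S_{i,m}/S_{i-1,m})}{\deg f_m}=\frac{a_m\,\tilde f_*c_1(\tilde f^*A)}{\deg\tilde f}+\frac{\tilde f_*c_1(M)}{\deg\tilde f}=a_m\cdot c_1(A)+\frac{\tilde f_*c_1(M)}{\deg\tilde f}.$$
The identity $\tilde f^*L\simeq M^{\otimes r}$ gives $c_1(M)=\tfrac{1}{r}\tilde f^*c_1(E)$ in $A^1(\tilde X)\otimes\QQ$, hence by the projection formula $\tilde f_*c_1(M)=\tfrac{\deg\tilde f}{r}c_1(E)$, and the asserted equality follows.

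The argument is essentially routine once Theorem \ref{main-flag} is in hand; the only point requiring a modicum of care is the bookkeeping involving the twist by $g_m^*M$ (in particular, checking that the $S_{i,m}$ really are subbundles and that the pushforward computation interacts correctly with both the projection formula for $\tilde f$ and the $\QQ$-linear identification $c_1(M)=\tfrac{1}{r}\tilde f^*c_1(E)$). This last identification only holds after tensoring $A^1(\tilde X)$ with $\QQ$, which is why the conclusion is stated in the rational Chow group.
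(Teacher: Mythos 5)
Your proposal is correct and follows exactly the route the paper intends: the paper gives no separate argument for Corollary \ref{cor-flag} beyond the remark that it follows from Theorem \ref{main-flag} ``in the same way as in proof of Corollary \ref{cor-subbundle}'', i.e.\ via the Bloch--Gieseker covering trick, the twist by $M$ with $\tilde f^*\det E\simeq M^{\otimes r}$, and the projection-formula bookkeeping you carry out. Your write-up is simply the explicit version of that reduction, and all the details (numerical flatness of $\tilde E$ via Theorem \ref{num-flat}, the identification $c_1(M)=\tfrac{1}{r}\tilde f^*c_1(E)$ in $A^1(\tilde X)\otimes\QQ$, and the push-forward computation) check out.
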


\medskip

\begin{remark}
One can easily generalize the above theorem to principal bundles. 
Namely, let  $G$ be a semisimple group over an algebraically closed field $k$ and let $P\to X$
be a principal $G$-bundle. Let us assume that a vector bundle $P(\mathfrak{g})\to X$ associated to $P\to X$ via the adjoint representation is nef (note that it is automatically of degree $0$, even if $G$ is just reductive).  Note that by the Ramanan--Ramanathan theorem (see \cite[Theorem 3.23 and p.~290]{RR})
and Theorem \ref{num-flat}, this is equivalent to strong $H$-semistability of $P$ and vanishing of $\int_X\Delta(P(\mathfrak{g}))\cdot H^{d-2}$.

Then one can find  a sequence of finite generically separable surjective morphisms $f_m:X_m\to X$ from smooth projective varieties and a sequence of reductions of structure groups of $f_m^*P$ to a fixed Borel 
subgroup $B\subset G$ such that for each fixed character of $B$ the sequence of associated line bundles has degrees that after dividing by the degrees of $f_m$ converge to $0$. 

One can also prove an analogous formulation for an arbitrary reductive group $G$. Both these facts follow immediately from Theorem \ref{main-flag}.
\end{remark}

\section{Strong restriction theorem}

One can also prove the following partial converse of Corollary \ref{cor-flag}. Here we need only behaviour of slopes of pull-backs of $E$.

\begin{proposition}\label{converse}	
Let $X$ be as in Theorem \ref{main-flag}.
	Let $H$ be a very ample divisor and let $E$ be a rank $r$ vector bundle on $X$. Assume that there exists a sequence of finite surjective morphisms $f_m:Y_m\to X$ from smooth projective varieties and 
	a sequence of filtrations of $f_m^*E$ by subbundles $$S_{0,m}=0\subset S_{1,m}\subset...\subset S_{r-1,m}\subset S_{r,m}=f_m^*E$$  
	so that for $i=1,...,r$ the quotients $S_{i,m}/S_{i-1,m}$ are line bundles and
	$$\lim _{m\to \infty } \frac{\max_i \{c_1(S_{i,m}/S_{i-1,m})f_m^*H^{d-1}\}}{\deg f_m} =\mu_H(E).$$ 	
Then for a general complete intersection curve $C\in |H|\cap ...\cap |H|$
the restriction $E_C$ is semistable. In particular, $E$ is slope $H$-semistable.
\end{proposition}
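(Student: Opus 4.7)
The approach is by contradiction: assuming $E|_C$ is not semistable for a general $C\in |H|^{d-1}$, I pick a destabilizing subsheaf $F\subset E|_C$, pull it back to $D_m:=f_m^{-1}(C)\subset Y_m$, and use the restriction of the filtration $S_{\bullet,m}$ to bound $\mu(F)$ from above by $\max_i c_1(L_{i,m})\cdot f_m^*H^{d-1}/\deg f_m$, which tends to $\mu_H(E)$ by hypothesis. A preliminary observation is that $\sum_i c_1(S_{i,m}/S_{i-1,m})\cdot f_m^*H^{d-1}=c_1(f_m^*E)\cdot f_m^*H^{d-1}=r\deg f_m\cdot\mu_H(E)$, so the numbers $c_1(L_{i,m})\cdot f_m^*H^{d-1}/\deg f_m$ (writing $L_{i,m}:=S_{i,m}/S_{i-1,m}$) have average exactly $\mu_H(E)$; only the hypothesis on their maximum is actually used below.

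By semicontinuity of the Harder--Narasimhan polygon in the family $\{E|_C\}_{C\in|H|^{d-1}}$, failure of semistability of $E|_C$ for general $C$ would yield a rational $\delta>0$ and a Zariski-dense open $T_0\subset|H|^{d-1}$ with $\mu_{\max}(E|_C)\ge\mu_H(E)+\delta$ for every $C\in T_0$. Fix such a $C\in T_0$ for which moreover, for every $m$, $D_m=f_m^{-1}(C)$ is a smooth irreducible curve: smoothness is a dense open Bertini condition applied to the base-point-free subsystem $f_m^*|H|\subset|f_m^*H|$ on the smooth $Y_m$, and irreducibility is a dense open condition by a Lefschetz-type connectedness theorem for preimages of general complete intersections under finite morphisms from irreducible varieties (in the special case $d=1$ one simply has $C=X$). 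The countable intersection over $m$ is nonempty provided the base field is uncountable, to which we may harmlessly reduce since semistability of $E|_C$ descends through arbitrary field extensions.

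Now fix a destabilizing subsheaf $F\subset E|_C$ of rank $s$ with $\mu(F)\ge\mu_H(E)+\delta$, and let $\pi_m=f_m|_{D_m}:D_m\to C$, finite flat of degree $\deg f_m$. The pullback $G_m=\pi_m^*F$ is a rank $s$ locally free subsheaf of $f_m^*E|_{D_m}$ with $\mu_{D_m}(G_m)=\deg f_m\cdot\mu(F)$. Intersecting with the restricted filtration $S_{\bullet,m}|_{D_m}$ gives an induced filtration of $G_m$ whose successive quotients inject into the line bundles $L_{i,m}|_{D_m}$; since $D_m$ is smooth, the rank-zero subquotients vanish, so exactly $s$ are of rank one and each has degree at most $\deg_{D_m}(L_{i,m}|_{D_m})=c_1(L_{i,m})\cdot f_m^*H^{d-1}$. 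Summing these bounds and dividing by $s\deg f_m$ yields $\mu(F)\le\max_i c_1(L_{i,m})\cdot f_m^*H^{d-1}/\deg f_m\to\mu_H(E)$, contradicting $\mu(F)\ge\mu_H(E)+\delta$. The ``in particular'' clause is then immediate: any destabilizing subsheaf of $E$ restricts to a destabilizing subsheaf of $E|_C$ for general $C$ (chosen to miss the codimension $\ge 2$ locus where it fails to be a subbundle), contradicting the semistability just obtained. The main technical obstacle is precisely the simultaneous smoothness and irreducibility of $D_m$ for all $m$ at one and the same general $C$; smoothness is routine Bertini, but irreducibility requires the Lefschetz-type connectedness input, after which the rest of the proof is a routine filtration argument on smooth curves.
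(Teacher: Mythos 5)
Your overall strategy (restrict the filtration to the preimage of a general complete intersection curve and compare degrees) is the same as the paper's, but there is a genuine gap at the step you yourself identify as the ``main technical obstacle'': the claim that for general $C$ the preimage $D_m=f_m^{-1}(C)$ is a \emph{smooth} irreducible curve because ``smoothness is a dense open Bertini condition applied to the base-point-free subsystem $f_m^*|H|$''. This is false in positive characteristic, and the proposition is stated over an arbitrary algebraically closed field --- indeed its intended application (Corollary \ref{converse2}) is precisely in characteristic $p$, where the $f_m$ may factor through Frobenius. In that case every member of $f_m^*|H|$ is non-reduced (e.g.\ $f=F_X$ on $\PP^2$ with $H$ a line gives $f^{-1}(H)=p\cdot(\text{line})$), so no member of the subsystem is smooth, and Bertini for base-point-free systems simply does not yield smoothness outside characteristic zero. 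Your subsequent filtration argument genuinely uses smoothness of $D_m$ (to kill torsion in the induced subquotients of $\pi_m^*F$ and to identify $\deg_{D_m}(L_{i,m}|_{D_m})$ with $c_1(L_{i,m})\cdot f_m^*H^{d-1}$ while $\deg\pi_m=\deg f_m$), so the gap is not cosmetic. The paper's proof avoids this by invoking Zhang's Bertini-type theorem in positive characteristic, which gives only that $(f_m^{-1}(C))_{\redu}$ is geometrically unibranched; one then passes to its normalization $\nu:\tilde D\to (f_m^{-1}(C))_{\redu}$, runs the filtration argument on $\tilde D$, and observes that the multiplicity of the non-reduced preimage cancels, i.e.\ $\deg(\nu^*(S_i/S_{i-1})_{\tilde D})=\deg g\cdot c_1(S_i/S_{i-1})f_m^*H^{d-1}/\deg f_m$ where $g:\tilde D\to C$. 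Your argument can be repaired along exactly these lines, but as written the key geometric input is wrong in the generality required.

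Two smaller points. First, your insistence on a single $C$ working for \emph{all} $m$ simultaneously (forcing the countable-intersection/uncountable-field detour) is unnecessary: since any destabilizing subsheaf of $E_C$ satisfies $\mu(F)\ge\mu_H(E)+\tfrac{1}{r(r-1)}$ by rationality of slopes, it suffices to fix one $m$ with $\max_i c_1(S_{i,m}/S_{i-1,m})f_m^*H^{d-1}/\deg f_m<\mu_H(E)+\tfrac{1}{r(r-1)}$ and choose $C$ general for that single $m$; this is what the paper does and it also removes the need for semicontinuity of the Harder--Narasimhan polygon. Second, in the paper's version the destabilizing comparison is made via a nonzero map from the maximal destabilizing subsheaf of $g^*E_C$ to some $\nu^*(S_{i_0}/S_{i_0-1})_{\tilde D}$ rather than by summing $s$ rank-one subquotient bounds; your summation variant is fine on a smooth curve but, once you are forced onto the normalization of a possibly non-reduced preimage, the paper's single-quotient comparison is the cleaner route.
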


\begin{proof}
	Let us fix some $0<\epsilon< \frac{1}{r(r-1)}$. By assumption there exists a morphism $f: Y\to X$ such that 
	$$0\le \frac{ \max_ic_1(S_{i}/S_{i-1})f^*H^{d-1}}{\deg f}  -\mu_H(E)<\epsilon .$$ 
	Let  $C\in |H|\cap ...\cap |H|$ be a general complete intersection curve
	and let  $D=(f^{-1}(C))_{\redu}$. Then by \cite[Th\'eor\`eme 1.2]{Zh} 
	$D$ is geometrically unibranched. In particular, $D$ is irreducible and the normalization $\nu: \tilde D\to D$ is (universally) injective. Let $g: \tilde D\to C$ be the composition of $f$ and $\nu $. Note that $g^*E_C$ has a filtration
	$$\nu^*(S_{0})_D=0\subset \nu^*(S_{1})_D\subset...\subset \nu^*(S_{r-1})_D\subset \nu^*S_{r}=\nu^*((f^*E)_D)=g^*E_C.$$  
Assume that $g^*E_C$ is not semistable and let $G$ be the maximal destabilizing subbundle of $g^*E_C$.
Then there exists $1\le i_0\le r$ such that the map $G\to \nu^*(S_{i_0}/S_{i_0-1})_D$ is nontrivial. Since $G$ is semistable and $\nu^*(S_{i_0}/S_{i_0-1})_D$ is a line bundle, this implies that $\mu (G)\le \deg (\nu^*(S_{i_0}/S_{i_0-1})_D)$. Therefore
$$\mu (G)\le \max_i\{\deg (\nu^*(S_{i}/S_{i-1})_D)\}=\deg g \cdot  \frac{ \max_ic_1(S_{i}/S_{i-1})f^*H^{d-1}}{\deg f}< \deg g \cdot (\mu_H(E)+\epsilon).$$
If $E_C$ is not semistable then by definition $E_C$ contains $F$ with $\mu(F)>\mu(E_C)=\mu_H(E)$.
But then $g^*F\subset g^*E_C$, so by the choice of $G$ we have 
$$\mu(G)\ge \mu (g^*F)=\deg g\cdot \mu (F)\ge \deg g \cdot \left(\mu _H(E)+\frac{1}{r(r-1)}\right).$$
This contradicts our choice of $\epsilon$.
\end{proof}

The above proposition generalizes and strengthens \cite[Theorem 3.4]{KP} that deals with the rank two case in positive characteristic.

\medskip

However, the following example shows that existence of the sequences of morphisms and filtrations as in Proposition \ref{converse} does not imply that $\int _X \Delta (E)H^{d-2}=0$ (cf.~Corollary \ref{cor-flag} for a strong converse).

\begin{example}
	Let $X$ be a smooth projective surface and let $H$ be a very ample line bundle on $X$. Assume that there exists a line bundle $L$ on $X$ such that $L.H=0$ and $L^2\ne 0$ (by the Hodge index theorem, such line bundles exist on any surface for which the rank of the N\'eron-Severi group is larger than $1$). 
	Then $E=L\oplus L^{-1}$ satisfies assumptions of Proposition \ref{converse} (e.g., with $f_m=\id _X$). In this example $c_1(E)=0$ but $E$ is not numerically flat as $\int_X c_2(E)=-L^2\ne 0$. Note also that there exists a smooth projective curve $C$ on $X$ such that $E_C$ is not semistable (e.g., one can take as $C$ a general curve in the linear system $|mH+L|$ for sufficiently large $m$) so Proposition 
	\ref{converse} cannot be strenghtened to all curves $C\subset X$. However, it can easily be strenghtened to all general complete intersection curves $C\in |m_1H|\cap ...\cap |m_{d-1}H|$.
\end{example}

\begin{corollary}\label{converse2}	
	Let $H$ be a very ample divisor and let $E$ be a rank $r$ vector bundle on $X$ defined over a base field $k$ of positive characteristic. Assume that there exists a sequence of finite surjective morphisms $f_m:Y_m\to X$ from smooth projective varieties and 
	a sequence of filtrations of $f_m^*E$ by subbundles $$S_{0,m}=0\subset S_{1,m}\subset...\subset S_{r-1,m}\subset S_{r,m}=f_m^*E$$  
	so that for $i=1,...,r$ the quotients $S_{i,m}/S_{i-1,m}$ are line bundles and
	$$\lim _{m\to \infty } \frac{\max_i \{c_1(S_{i,m}/S_{i-1,m})f_m^*H^{d-1}\}}{\deg f_m} =\mu_H(E).$$ 	
	Then for a very general complete intersection curve $C\in |H|\cap ...\cap |H|$
	the restriction $E_C$ is strongly semistable. In particular, $E$ is strongly slope $H$-semistable.
\end{corollary}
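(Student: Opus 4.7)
The plan is to deduce the corollary from Proposition \ref{converse} applied separately to each Frobenius pullback $(F_X^n)^*E$, and then to choose a complete intersection curve that is very general in the sense of lying in a countable intersection of dense open subsets of the parameter space.

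Fix $n\ge 0$. The first step is to check that the same sequence $f_m:Y_m\to X$ witnesses the hypothesis of Proposition \ref{converse} for $(F_X^n)^*E$ in place of $E$. Since the absolute Frobenius is natural, $F_X^n\circ f_m=f_m\circ F_{Y_m}^n$, whence
$$f_m^*(F_X^n)^*E\,=\,F_{Y_m}^{n*}(f_m^*E).$$
Because $F_{Y_m}$ is flat (as $Y_m$ is smooth), pulling the given filtration back by $F_{Y_m}^n$ yields a filtration of $f_m^*(F_X^n)^*E$ by subbundles whose successive quotients are the line bundles $F_{Y_m}^{n*}(S_{i,m}/S_{i-1,m})\cong(S_{i,m}/S_{i-1,m})^{\otimes p^n}$, whose first Chern classes equal $p^n c_1(S_{i,m}/S_{i-1,m})$. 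Since also $\mu_H((F_X^n)^*E)=p^n\mu_H(E)$, both sides of the limit condition scale by the same factor $p^n$ and the hypothesis persists.

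Proposition \ref{converse} then provides, for each $n\ge 0$, a Zariski dense open subset $U_n$ of the parameter space of complete intersection curves in $|H|\cap\cdots\cap|H|$ such that $((F_X^n)^*E)_C$ is semistable for every $C\in U_n$, and at the same time each $(F_X^n)^*E$ is slope $H$-semistable. Applying this second conclusion for all $n$ yields at once the ``in particular'' clause: $E$ is strongly slope $H$-semistable. For the main statement, choose $C$ in the countable intersection $\bigcap_{n\ge 0}U_n$, so that $C$ is very general; using the compatibility of absolute Frobenius with the closed immersion $C\hookrightarrow X$, which gives $((F_X^n)^*E)_C=(F_C^n)^*(E_C)$, it follows that $(F_C^n)^*(E_C)$ is semistable for every $n$, that is, $E_C$ is strongly semistable on $C$.

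The only delicate point is the Frobenius compatibility unpacked in the second paragraph; once this is granted, the rest is a direct application of Proposition \ref{converse} to the family of Frobenius twists, combined with the standard ``very general equals countable intersection of Zariski general'' principle.
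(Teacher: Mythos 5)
Your proposal is correct and follows exactly the paper's (much terser) argument: pull back the filtrations by the absolute Frobenius of $Y_m$, note that both sides of the limit condition scale by $p^n$, and apply Proposition~\ref{converse} to each $(F_X^n)^*E$, taking $C$ in the countable intersection of the resulting dense open loci. The Frobenius compatibilities you spell out are precisely what the paper leaves implicit.
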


\begin{proof}
	Our assumptions imply that for any non-negative integer $n$, the bundle $(F_X^n)^*E$ also has an analogous sequence of the same morphisms and filtrations obtained by pull-backs of filtrations of $E$. So the assertion follows from Proposition \ref{converse}.
\end{proof}

\section*{Acknowledgements}

The author was partially supported by Polish National Centre (NCN) contract number
2021/41/B/ST1/03741.

\end{document}